\newtheorem{thm}{Theorem}[section]
\newtheorem{theorem}[thm]{Theorem}
\newtheorem{corollary}[thm]{Corollary}
\newenvironment{proof}{{\bf Proof:}}{\hfill$\square$\vskip.5cm}
\newcommand{\R}{\mathbb{R}}
\newcommand{\C}{\mathbb{C}}
\title{Online minimum search for Brownian motion and the Cauchy process: Multiple approaches}
\author{Erik Wu\footnote{\href{mailto:erikwu35758@gmail.com}{erikwu35758@gmail.com}, }\hspace{3pt} 
 and
Shannon Starr\footnote{\href{mailto:slstarr@uab.edu}{slstarr@uab.edu}}
\\[3pt]
\small
{\normalsize ${}^{*}$} 
\large James Clemens High School\\
\small 11306 County Line Rd,
Madison, AL 35756\\[3pt]
{\normalsize ${}^{\dagger}$} 
\large Department of Mathematics, University of Alabama at Birmingham\\
\small 1402 10th Avenue South,
Birmingham, AL 35294--1241
}
\date{December 29, 2023}
\begin{document}

\maketitle

\begin{abstract}
 \setcounter{section}{0}
The distribution for the minimum of Brownian motion or the Cauchy process is well-known using the reflection principle. Here we consider the problem of finding the sample-by-sample minimum,
which we call the online minimum search. We consider the possibility of the golden search method, but we show quantitatively that the 
bisection method is more efficient.
In the bisection method there is a
hierarchical parameter, which tunes the depth to which
each sub-search is conducted, somewhat similarly to how a depth-first search works to generate a topological ordering on nodes.
Finally, we consider
the possibility of using harmonic measure, which
is a novel idea that has so far been unexplored.
\end{abstract}

\section{Introduction}

For multiple applications in modern computing algorithms, a key step is optimizing functions with many local extrema.
A canonical example is the problem of neural networks for computing algorithms 
such as recognizing hand-written letters \cite{ZakChong1,MilgramCherietSabourin}.
An older application is maximum likelihood estimation \cite{CasellaBerger}.
Usually, such functions have many parameters and the difficulty is in optimizing in a high-dimensional vector space.
But the 1-dimensional problem of finding the minimum of Brownian motion on an interval is similar in that
it possesses many local minima. Almost surely, it possesses one global minimum (since B.m.~minus its running minimum
is a reflected Brownian motion: see for example \cite{MortersPeres}).

We demonstrate the standard optimization approaches on this special problem.
The goal is to find a good approximation to the minimum with as few queries of the random Brownian motion function
as possible, in order to minimize the run-time of the algorithm.
Using this standard, we find that the golden search method is inferior to the bisection method. 
Here, the bisection is applied utilizing Monte Carlo and randomized methods.

In this bisection method, we decompose the interval into blocks and perform a hierarchical search in different blocks, selecting
random intervals to estimate the global minimum over each iteration.
This method is highly efficient, consistently. It is always accurate for sufficiently large iterations and hierarchical levels.
We describe this in Section \ref{sec:2}.

To demonstrate the greater accuracy and efficiency of the bisection method, we run large simulations to
model each method's average error and run times depending on specific combinations of inputted parameters.
We present this data in Section \ref{sec:3}.


The bisection method also applies to the Cauchy process.
Because it is a L\'evy process, the reflection principle gives the formula for the distribution of the minimum
of the Cauchy process on the interval \cite{Sato}.
The Cauchy process could be seen as an interesting alternative to the Brownian motion process
in finance, serving a pedagogical r\^ole.
It also has applications in other areas of science, which we describe in Section \ref{sec:5}.

Section \ref{sec:4} of our article is devoted to harmonic measure. If we are trying to find the minimum value, we could imagine
a Brownian particle diffusing up from infinitely far below our random function. Then the harmonic measure
gives the probability for it to hit each of the intervals defined so far.
The harmonic measure may be calculated using complex-analytic tools, especially the Schwarz-Christoffel
mapping, because we use the piece-wise linear approximation
to Brownian motion, for a path that has been sampled at a finite collection of points
in the algorithm up to the $n$th step.

\section{Potential Search Methods} \label{sec:2}

A textbook optimization technique is the Golden Section search (GSS) method. 
For a continuous unimodal function it allows an efficient calculation of the minimum point
\cite{ZakChong2}.
But many optimization methods are applied to functions with multiple local minima.
One may still apply the Golden Section method, and restart if one attains a criterion for having found a local minimum.
However, this is not guaranteed to be efficient.

\subsection{Golden Section Search} \label{subsection1}
Suppose that we want to find the unique minimum for a convex function $f:[0,1] \rightarrow \mathbb{R}$ without using the derivatives of $f$. Then we use
the golden section method. It is called the golden section because of how we choose points $0=t_1<t_2<t_3=1$, that we use to take values of $f$: namely
$f(0), f(t_1), f(t_2),$ and $f(t_3)$, since we take $t_1$ and $t_2$ to be $1/\varphi^2$ and $1/\varphi$ (where $\varphi$ is the golden ratio), respectively. The reason for this choice will be explained shortly.

Since $t_1+t_2 = 1$, the four points are symmetrically situated on the number line as shown.
\begin{figure}[h]
    \centering
    \includegraphics[width=7cm]{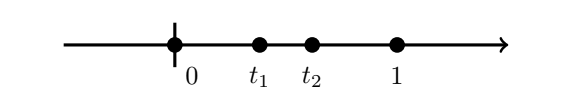}
    \caption{A symmetric number line representing the first iteration of a GSS}
\end{figure}

Now, for simplicity, we assume that $f(0), f(t_1), f(t_2),$and $f(1)$ are distinct points. Then we will either have $f(t_1) < f(t_2)$ or $f(t_1) > f(t_2)$.
If $f(t_1) < f(t_2)$, then we choose a new subinterval $[a',b'] = [0,t_2]$ to replace the original interval $[0,1]$. Similarly, If $f(t_1) > f(t_2)$, then we choose a new subinterval $[a',b'] = [t_1,1]$ to replace the original interval $[0,1]$. Two possible schematic representations for the graph of the function,
consistent with these two scenarios are shown:
\begin{figure}[h]
    \centering
    \includegraphics[width=6cm]{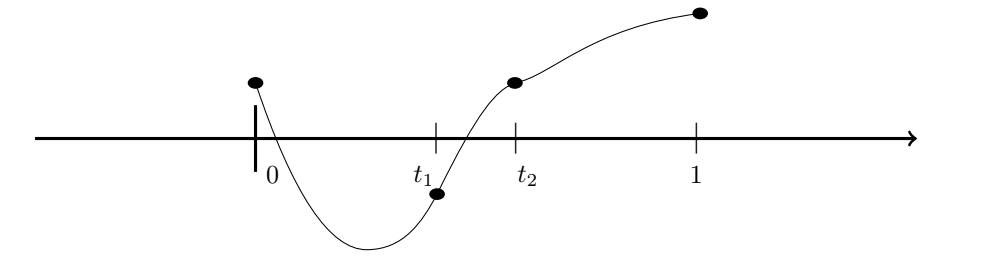}
    \caption{A possible graph when $f(t_2) > f(t_1)$}
\end{figure}

\begin{figure}[ht]
    \centering
    \includegraphics[width=6cm]{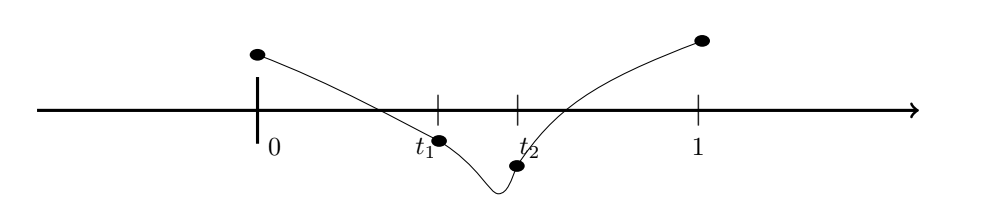}
    \caption{A possible graph when $f(t_1) > f(t_2)$}
\end{figure}

If $f(t_1) < f(t_2)$, then min({$f(s) : 0 \leq s \leq t_2$}) will not be $f(t_2)$. So there is a local minimum on the interval $[0, t_2]$ that at a point other
than $t_2$. Since $f$ is unimodal, this means that local minimum of $f$ on $[0, t_2]$ is actually the local minimum of $f$ on $[0,1]$. That is why we take 
$[a',b'] = [0,t_2]$ in this case. The same reasoning applies for the $f(t_1) > f(t_2)$ case, since if we find a local minimizer on $[t_1,1]$ other than at $t_1$, then it will be the local minimizer on $[0,1]$. That is why we choose $[a',b'] = [t_1, 1]$.

For the case where $[a,b] = [0,1]$, the reason we took $t_1 = 1/\varphi^2$ and $t_2 = 1/\varphi$ is as follows. If we take $[a',b'] = [0,t_2]$, then $t_0^{(1)} = 0$ and $t_3^{(1)} = 1/\varphi$. But then $t_2^{(1)} = t_3^{(1)}/\varphi = 1/\varphi^2 = t_1^{(0)}$. Thus, we save ourselves the computational effort of re-evaluating
$f(t_1^{(0)})$ when we need to calculate $f(t_2^{(1)})$. It is similar if $[a', b'] = [t_1, 1]$, since we have $t_1^{(1)}=t_1+(1-t_1)/\varphi^2=t_2$ (by the fact that $\varphi^2+1 = \varphi)$.

We can utilize the original golden-ratio oriented symmetry to find that in general, $t_1^{(k)} = t_0^{(k)} + (t_3^{(k)} - t_0^{(k)})/\varphi^2$ and $t_2^{(k)} = t_0^{(k)} + (t_3^{(k)} - t_0^{(k)})/\varphi$. Using this and the strategic storage of $f(t_0^{(k)}),f(t_1^{(k)}),f(t_2^{(k)}),$ and $f(t_3^{(k)})$, we can create a computational algorithm to find the minimum of a unimodal function.

A sample Golden Section Search algorithm call's associated graph (on the function $f(x) = \cos(x)$) using matplotlib is shown below.

\begin{figure}[h]
    \centering
    \includegraphics[width=5cm]{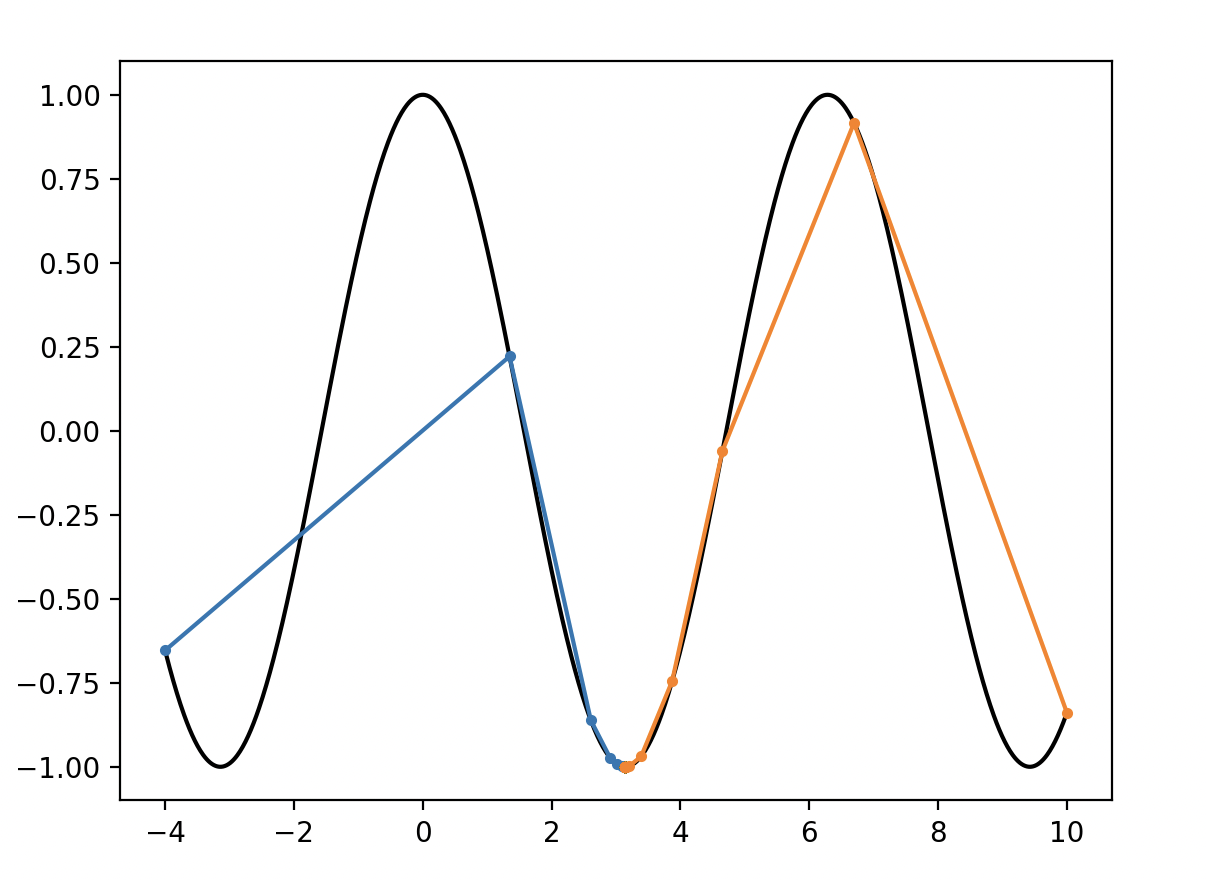}
    \caption{A sample GSS algorithm call on $f(x) = \cos(x)$}
\end{figure}

\subsection{Iterative Golden Section Search}
As seen above, the algorithm works well for unimodal functions; however, problems arise when the inputted function has multiple local minima. A possible example of this scenario is shown below for a Golden Section Search on the function $f(x) = x + \sin(x) + x\sin(x)$.

\begin{figure}[h]
    \centering
    \includegraphics[width=5cm]{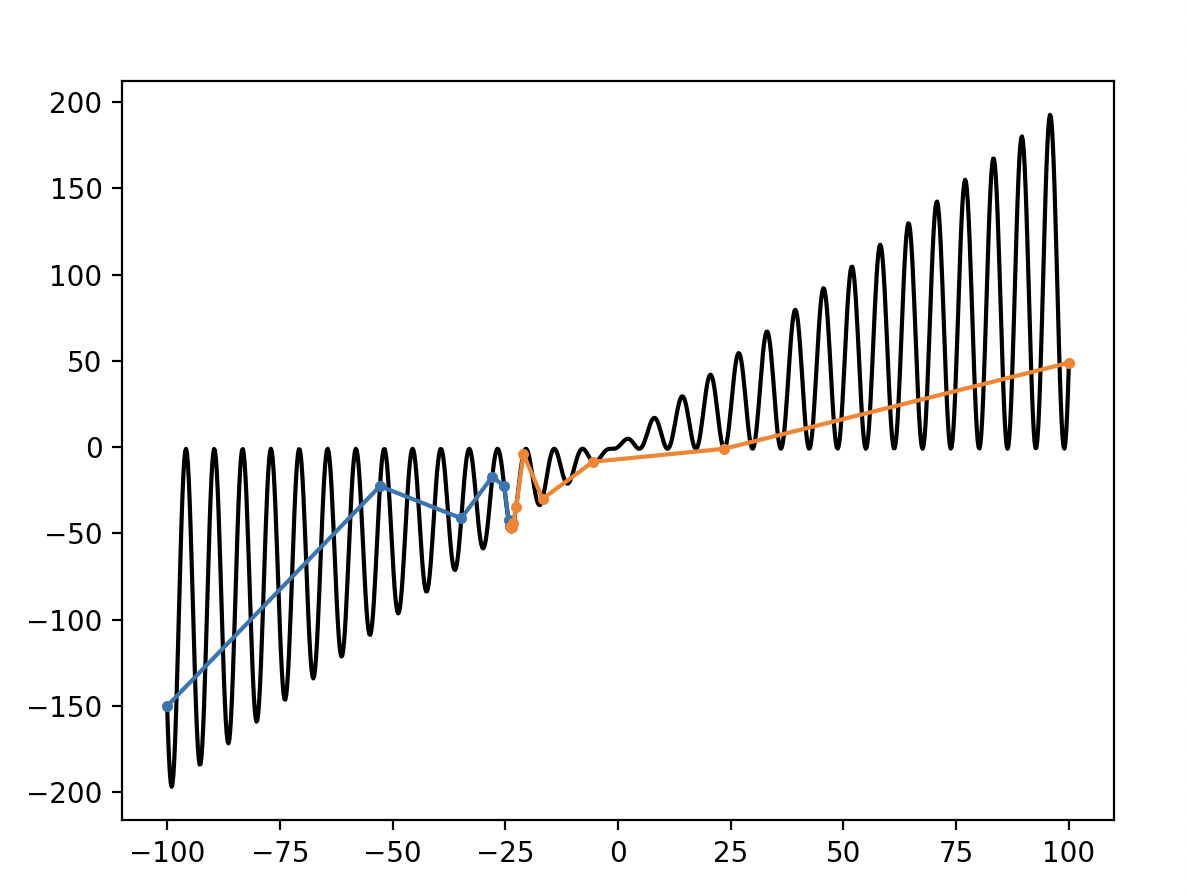}
    \caption{A sample GSS algorithm call on $f(x) = x + \sin(x) + x\sin(x)$}
\end{figure}
Applying the Golden Section Search to a Brownian Bridge, a clearly non-unimodal function, we see that the function does not find the global minimum, 
at least consistently. 
\begin{figure}[h]
    \centering
    \includegraphics[width=5cm]{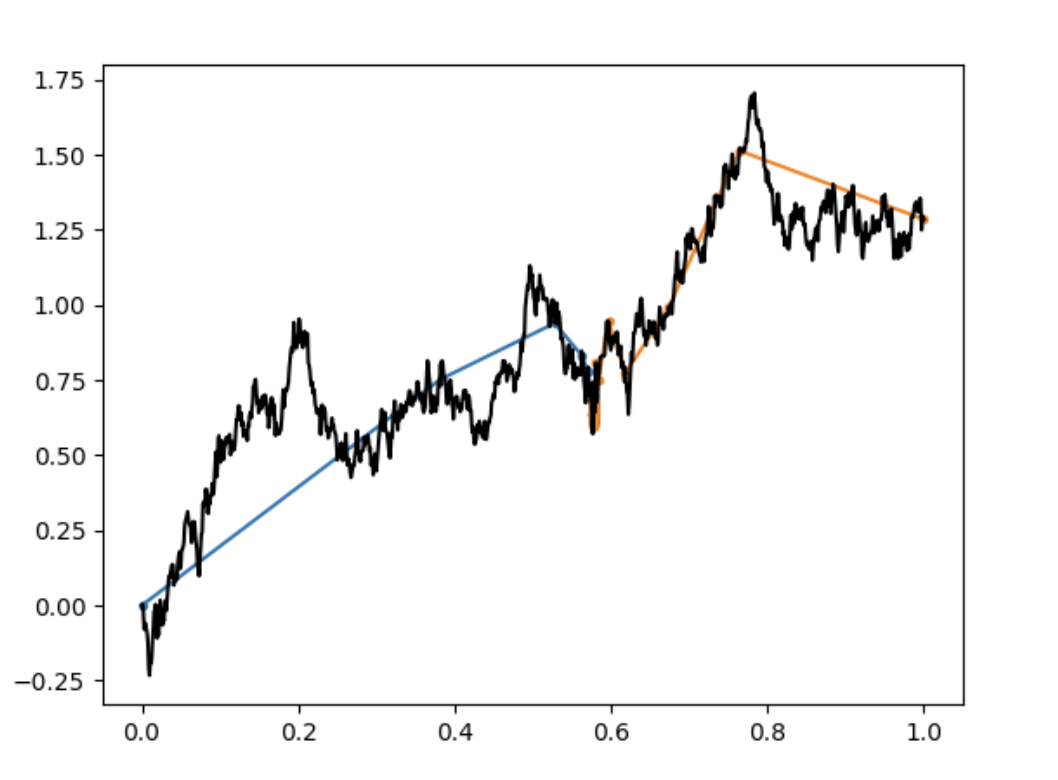}
    \caption{An example when GSS fails to find the global minimum on a Brownian Bridge}
\end{figure}

To fix this limitation on the Golden Section Search's requirement on uni-modality, we apply a robust approach. We first simulate a Brownian Bridge on $[0,1]$ 
with the end points being $(0,0)$ and $(1, W)$, where W is a standard, normal random variable (normal with mean 0 and variance 1). To fill in the points, we first find an intermediate point (specifically at $s_0 = 1/2$) utilizing the fact that $B(s) = sB(1) + \sqrt{s(1-s)} \cdot Z$ for some simulated standard, normal random variable $Z$ and $0 < s < 1$. Furthermore, for $0<r<s<1$, since $B(s) - B(r)$ and $B(1) - B(r)$ can be simulated like a new Brownian motion $\Tilde{B}(s-r)$ and $\Tilde{B}(1-r)$, respectively, we can let $B(s)$ equal
\begin{equation}
B(r) + \frac{s-r}{1-r}\bigl(B(1)-B(r)\bigr) + \biggl(\frac{(s-r)(1-s)}{1-r}\biggr)^{1/2} \cdot U
\end{equation}
for some standard normal random variable $U$ independent of what has been simulated thus far and $B(r)$ has already been found. Using this formula, we construct an algorithm to simulate new points on the Brownian Bridge (e.g., at $s_1 = 1/4$ or $s_2 = 3/4$ and so on by bisecting existing intervals) such that the final Bridge contains $2^n + 1$ simulated points where $n$ is a positive integer parameter. 

Now, with the simulated Brownian bridge, we partition the bridge (which is simulated on $[0,1]$) into $2^m$, where $m\in\mathbb{N}$, smaller intervals. On each of these intervals, we apply an iteration of the previously inaccurate Golden Section Search and add each found minimum into a specified "minimum-list". Once each sub-search has been completed, we first add the end points of the bridge as there is a possibility of the path going almost strictly upward or downward. Afterwards, we perform a simple search of the ``minimum-list'' to find a more accurate approximation of the online (sample-by-sample) minimum on a Brownian bridge.

Additionally, this algorithm takes in one more parameter: $\varepsilon$. This $\varepsilon$ parameter essentially specifies when the computer will stop each sub-search. More specifically, on the $k^{\text{th}}$ iteration of the sub-search, if $B(t_1^{(k)}) < B(t_2^{(k)})$, then the sub-search stops if $|B(t_3^{(k)}) - B(t_3^{(k+1)})| < \varepsilon$. Similarly, if $B(t_1^{(k)}) > B(t_2^{(k)})$, then the sub-search stops if $|B(t_0^{(k)}) - B(t_0^{(k+1)})| < \varepsilon$.

We call this method an Iterative Golden Section Search and figures are shown below to visually demonstrate how it works and how it compared to the original naive Golden Section Search approach in Subsection \ref{subsection1} when finding the minimum.
\pagebreak

\begin{figure}[h]
\centering
\begin{subfigure}{.5\textwidth}
  \centering
  \includegraphics[width=5cm]{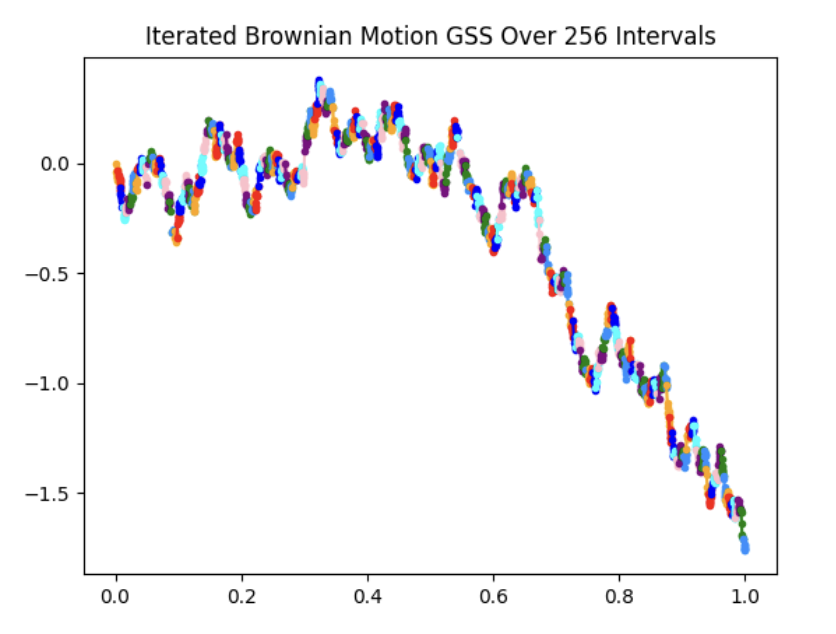}
  \caption{Visual graph of the iterative method}
  \label{fig:sub1}
\end{subfigure}%
\begin{subfigure}{.5\textwidth}
  \centering
  \includegraphics[width=5cm]{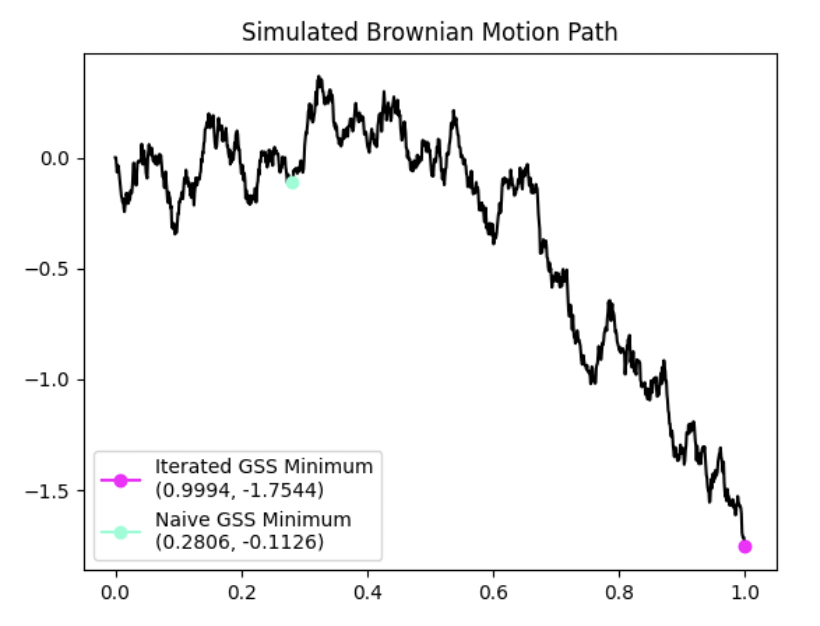}
  \caption{A comparison of the methods' minima estimates}
  \label{fig:sub2}
\end{subfigure}
\caption{The Iterative Golden Section Search}
\label{fig:test1}
\end{figure}

From Figure \ref{fig:test1} it is clear that the Iterative Golden Section Search did a better job in finding the minimum than the Naive Golden Section Search, but this program was simply shown on one simulated sample. Of course, there is a high likelihood of volatility between each algorithm's performance on any given sample; therefore, we must also analyze consistent accuracy that could hinder or benefit both of these aforementioned algorithms. 

\subsection{Analyzing the Iterative Golden Section Search and the Naive Approach} \label{subsection2}
To analyze the overall accuracy of both of these methods, we will do a simple comparison. For the Naive Approach, we will simply run the method for 
500 times, which is chosen in order to reduce volatility, and for each iteration, we find the absolute difference (we call this the error) between the estimated and actual minimum (this sequence of steps will repeated 8 times). In the end, we find the average error over the 500 iterations. Similarly, for the Iterative Golden Section Search, we will run 500 iterations over $2^n$ partitions for each $n$ where $n \in \mathbb{N}$ and $0<n<9$ (we define this interval for $n$ as higher values often take much longer running times for minimal difference in accuracy). In these first trials we will set $\varepsilon = 0.001$. The results of these comparisons are shown in the table below. 

\begin{table}[h]
\centering
\caption{Accuracy Comparison for the Iterative and Naive GSS Searches}
\begin{subfigure}{.5\textwidth}
  \centering
  \includegraphics[width=8cm]{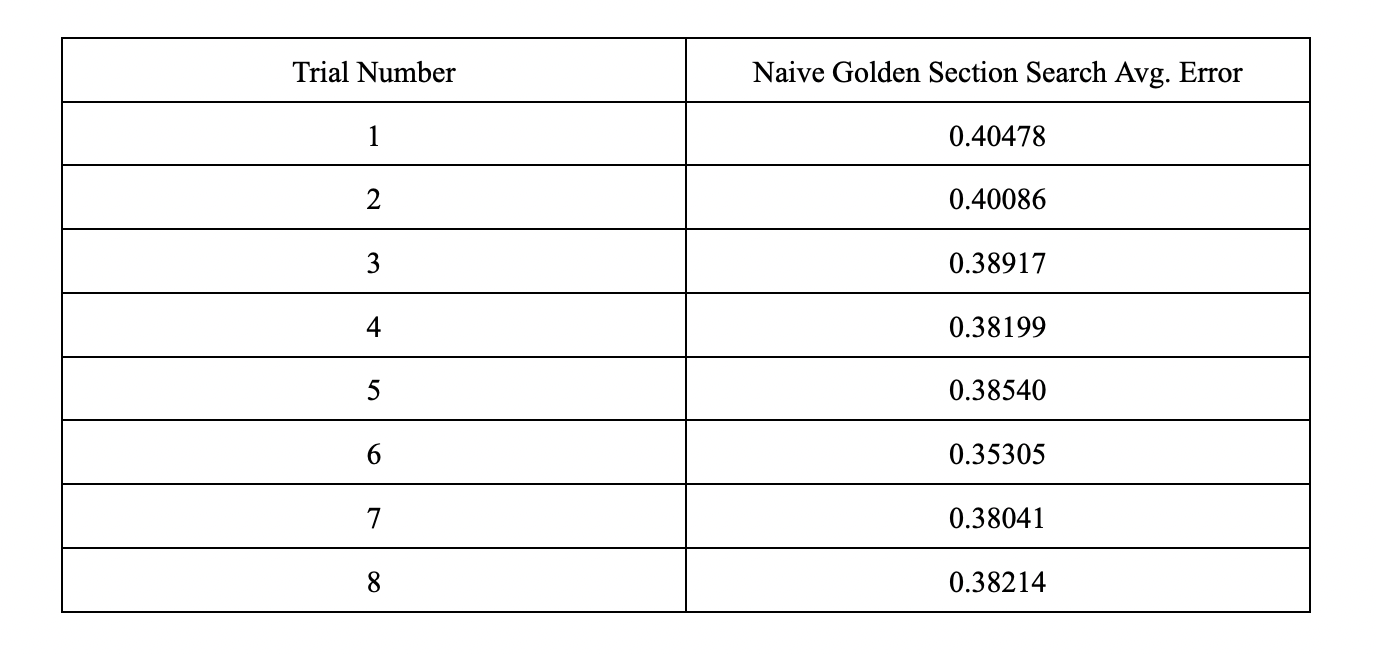}
  \label{fig:sub1}
\end{subfigure}%
\begin{subfigure}{.5\textwidth}
  \centering
  \includegraphics[width=8cm]{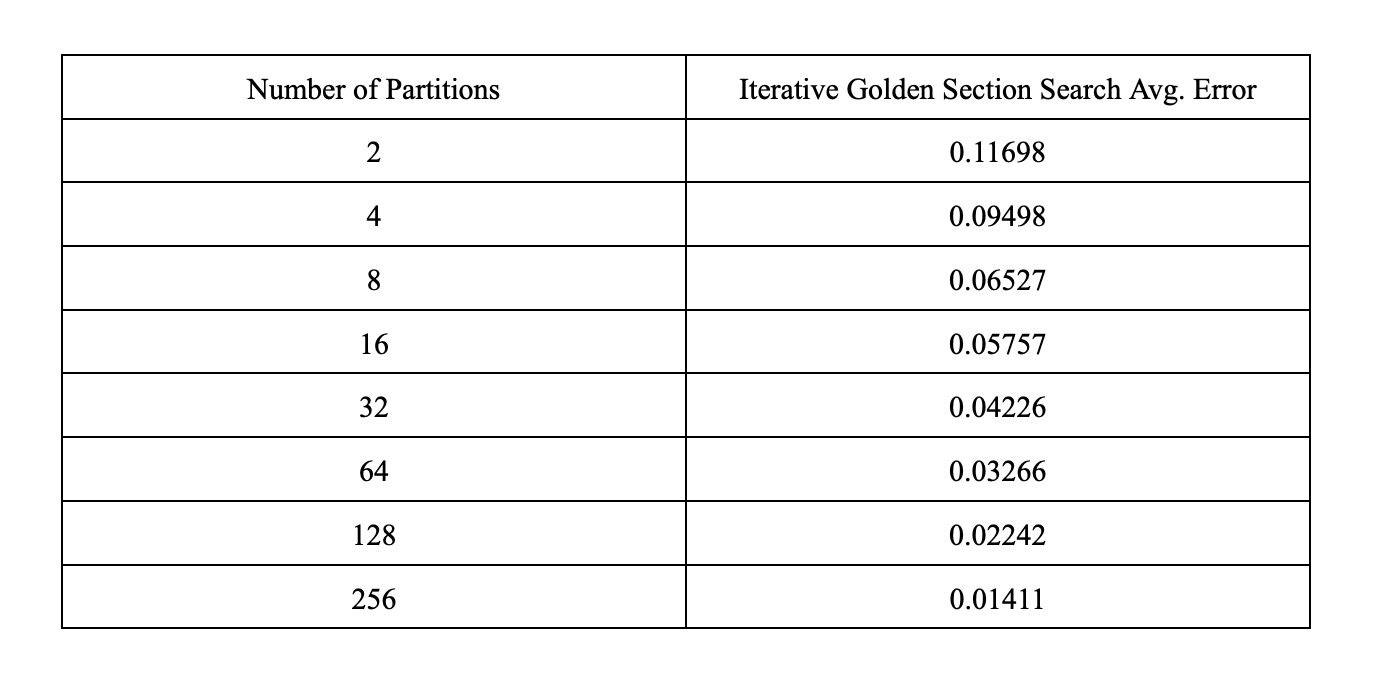}
  \label{fig:sub2}
\end{subfigure}
\label{table:test2}
\end{table}

From Table \ref{table:test2}, we see that the Iterative Golden Section Search is much better at finding the global minimum even at lower numbers of partitions. This data also supports the notion that greater number of  partitioned intervals allow for greater accuracy in determining the global minimum. In addition, the average error on the Naive Method demonstrates that the searching algorithm usually gets stuck in a local minimum, which is rarely a great estimation of the global minimum. However, there is a major issue with the Iterative Golden Section Search: scalability. Regarding scalability, the Iterative Golden Section Search does not scale well to other stochastic processes like the Cauchy process since it requires known values at golden-ratio related times, which are hard and inefficient to find without a well-defined, explicit bridge formula. 

We will now utilize the Bisection method (which is scalable to the Cauchy process) to find the global minimum of Brownian Motion due to its reliance on conveniently found values at integer times.

\subsection{Monte Carlo Bisection Method}
Generally, the Bisection method is used to find the root of a continuous function that is based upon the intermediate value theorem. In our case, we will adapt this method by incorporating random and Monte Carlo features to be applied to our continuous Brownian bridge path as follows. 

Similar to the algorithms related to the Golden Section Search, we will first simulate a Brownian bridge using the same methodology as before. Afterwards, we will utilize Monte-Carlo methods to find the global minimum of our Brownian bridge path. First, we will specify a parameter $l$ ($l \in \mathbb{N}$) that essentially tells the algorithm that we will simulate $2^l + 1$ Brownian bridge points. This algorithm will work by iteratively and randomly bisecting intervals. Starting with on $[0,1]$, we generate a random integer, either 0 or 1; if the integer is 0, then we focus now on $[0, 1/2]$, and if the integer is 1, then we focus now on $[1/2, 1]$. Now, we repeat this process continually for $r$ times, where r is a positive integer parameter and $r\leq l$. When the interval (we call it $[t_0, t_1]$) is bisected $r$ times, we store $(t_0 + t_1)/2$ in a ``minimum list'' as an estimation of the global minimum of our Brownian Bridge path; this process of randomly bisecting intervals and storing these values in a ``minimum list'' is repeated $g$ times, where $g$ is a positive integer parameter. Lastly, we simply find the minimum of the ``minimum list'', which is outputted as our approximation of the global minimum over the simulated Brownian bridge. A possible schematic representation of one iteration of this method is shown below. 

\begin{figure}[h]
    \centering
    \includegraphics[width=5cm]{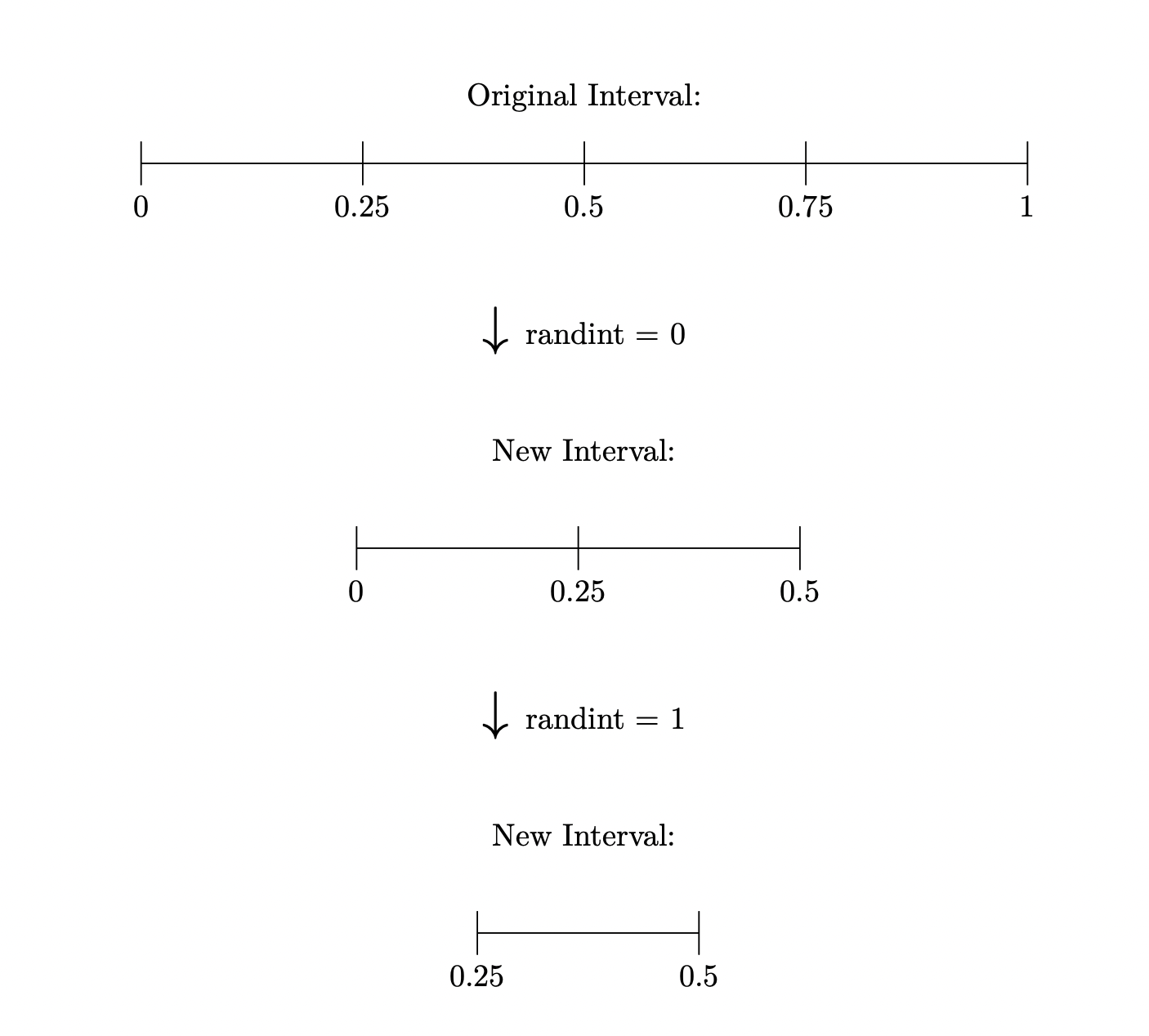}
    \caption{Representation of the Monte Carlo Bisection Algorithm}
\end{figure}

\subsection{Monte Carlo Bisection on a Brownian Bridge and Cauchy Process Path}
We now apply this Monte Carlo Bisection (MCB) method on a Brownian Bridge. In the following visualization, the blue dots represent approximated minima (that are appended to the ``minimum list'' each iteration) across $[0,1]$ and every pair of two consecutive red dots represent the randomly selected intervals over each iteration. Specifically, the following visualization is run with the parameters $l = 10, r = 10, g = 1024$ with a comparison graph to show the estimated global minimum compared with the estimated global minimum. 

\pagebreak
\begin{figure}[h]
\centering
\begin{subfigure}{.5\textwidth}
  \centering
  \includegraphics[width=5cm]{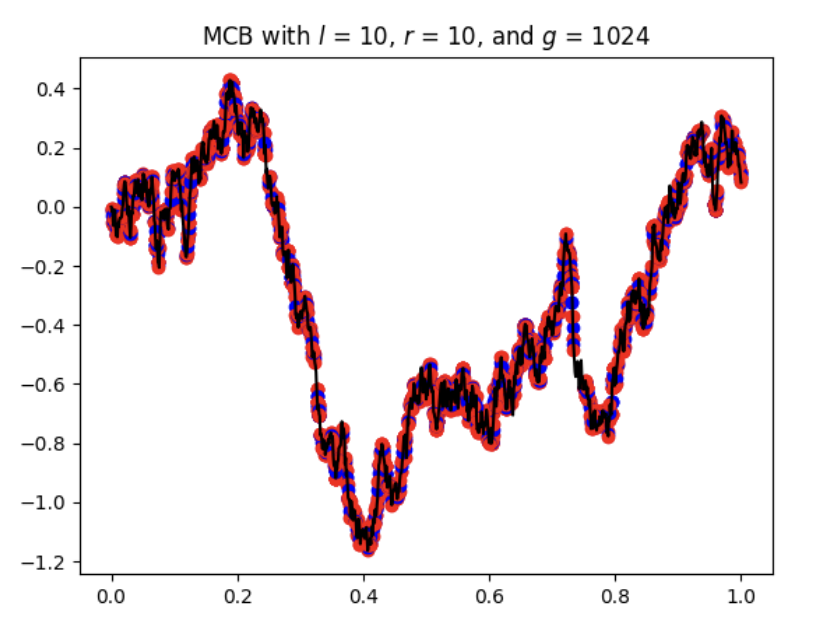}
  \caption{Visual graph of the MCB method}
  \label{fig:sub1}
\end{subfigure}%
\begin{subfigure}{.5\textwidth}
  \centering
  \includegraphics[width=5cm]{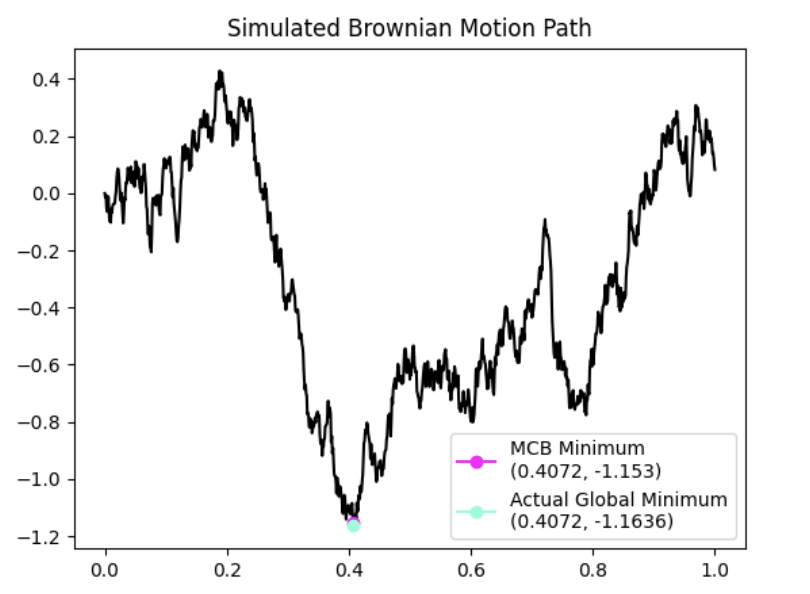}
  \caption{Actual global minimum vs. MCB estimation}
  \label{fig:sub2}
\end{subfigure}
\label{fig:test}
\caption{The MCB Method on a Brownian Bridge}
\end{figure}

As mentioned this method may also be utilized on a Cauchy process path. We will not be utilizing a Cauchy bridge, since this would require us to find the inverse of
\begin{equation}
G(u,v) = \int\frac{f_1(u+v)f_1(u-v)}{f_2(2u)}dv
\end{equation}
where $f_t(x) = \frac{1}{\pi} \cdot \frac{t}{t^2 + x^2}$, the probability density function of the Cauchy process. Of course, we can simplify this $G(u,v)$, but we will soon see why it is unrealistic to use the bridge. Since we want the integral over the whole number line to be equal to 1 for the integrand of $G(u,v)$, we want to find a constant of integration $C$ such that $\lim_{v\to-\infty} G(u,v) = 0$ and $\lim_{v\to\infty} G(u,v) = 1$. Eventually, we find $C = 1/2$ and 
\begin{equation}
\label{eq:Bayes}
\small G(u,v)=\frac{1}{4u\pi}\biggl(-\ln(u^2-2uv+v^2+1)+\ln(u^2+2uv+v^2+1)-2u\arctan(u-v)+2u\arctan(u+v)\biggr)+\frac{1}{2}
\end{equation}
Utilizing the inverse of $G(u,v)$ to simulate the Cauchy process will significantly slow down the program; thus, we simply use a regular Cauchy path to execute our algorithm. 
(But it might be useful in a hybrid algorithm we describe in Section 5.)
To simulate a Cauchy path, we utilize the Python package ``stochastic''\footnote{\url{https://github.com/crflynn/stochastic}} in order to simulate $2^l + 1$ points, allowing us to create a very similar algorithm (with the same defined parameters) to perform the MCB on the Cauchy process. A visual representation with $l = 10, r = 10, g = 1024$ and a comparison graph to show the global minimum compared with the estimated global minimum are shown below. 

\begin{figure}[h]
\centering
\begin{subfigure}{.5\textwidth}
  \centering
  \includegraphics[width=5cm]{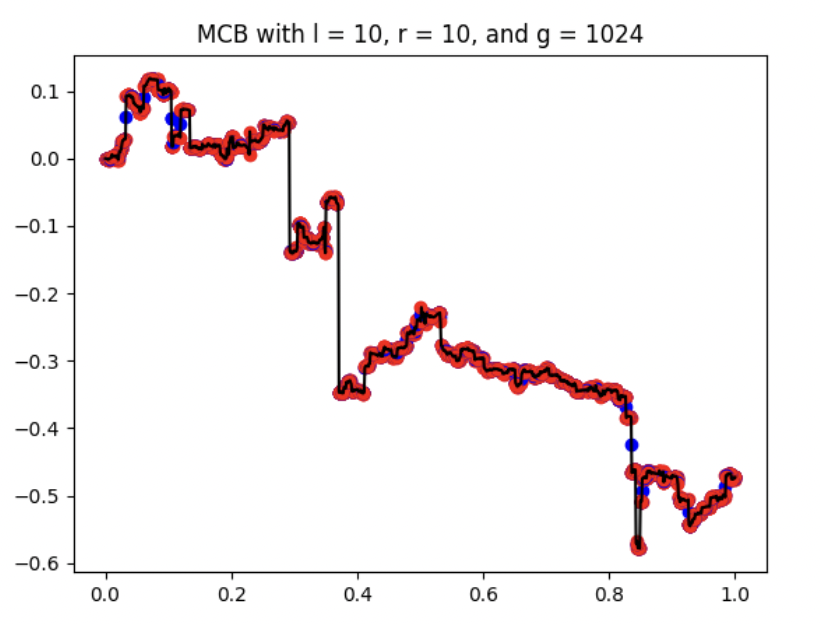}
  \caption{Visual for Cauchy MCB}
  \label{fig:sub1}
\end{subfigure}%
\begin{subfigure}{.5\textwidth}
  \centering
  \includegraphics[width=5cm]{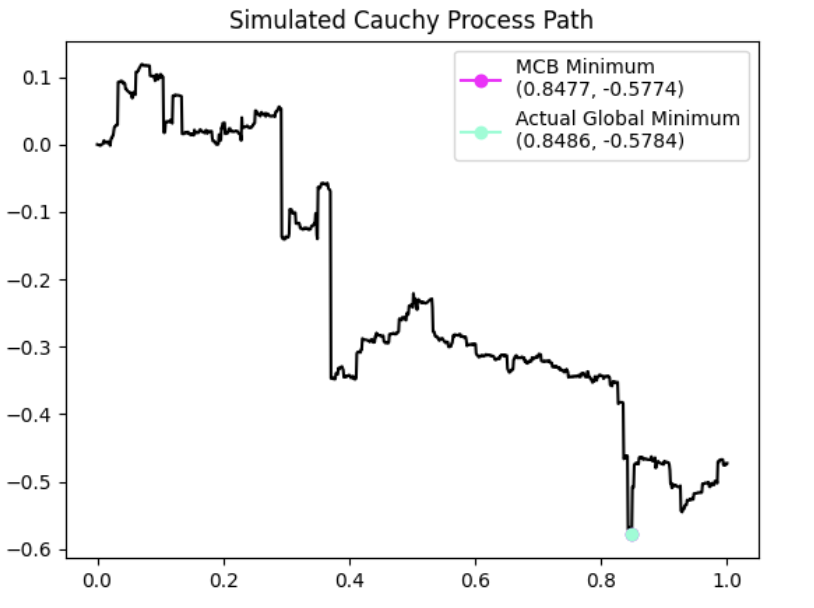}
  \caption{Actual global minimum vs. MCB estimation}
  \label{fig:sub2}
\end{subfigure}
\label{fig:test}
\caption{The MCB Method on a Cauchy Process Path}
\end{figure}

\section{Comparison Between Search Methods} \label{sec:3}
\subsection{Accuracy for Iterative GSS vs. MCB}
To test for the accuracy of both methods, we employ a similar method as to when we compared the Iterative GSS and the Naive Approach in Subsection \ref{subsection2}. For the Iterative GSS, we will once again run 500 iterations over $2^n$ partitions for each $n\in\mathbb{N}$ and $0<n<9$ and calculate the average error ($|$actual minimum - estimated minimum$|$) for each number of partitions. We will once again set $\varepsilon = 0.001$. However, for the MCB method, we will run 500 iterations for $l = n, r = n, g = 2^n$, where $n\in\mathbb{N}$ and $0<n<15$ since the program has higher computational efficiency to handle these large parameters, which will be seen in the following subsection. For the MCB, we will also calculate the average error for each parameter combination, and all of these simulations will be run on a Brownian Bridge, since the Cauchy process does not scale well for the Iterative method. The results are shown in the table below.

\begin{table}[h]
\centering
\caption{Accuracy Comparison for the Iterative and MCB Methods}
\begin{subfigure}{.5\textwidth}
  \centering
  \includegraphics[width=8cm]{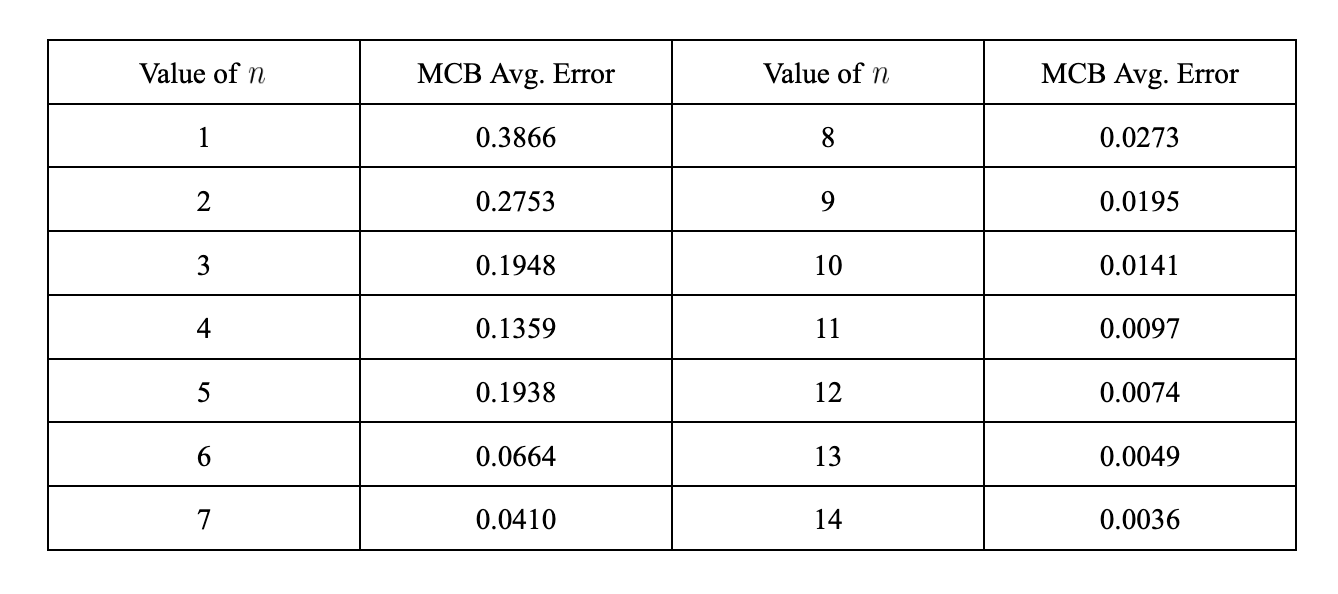}
  \label{fig:sub1}
\end{subfigure}%
\begin{subfigure}{.5\textwidth}
  \centering
  \includegraphics[width=7.4cm]{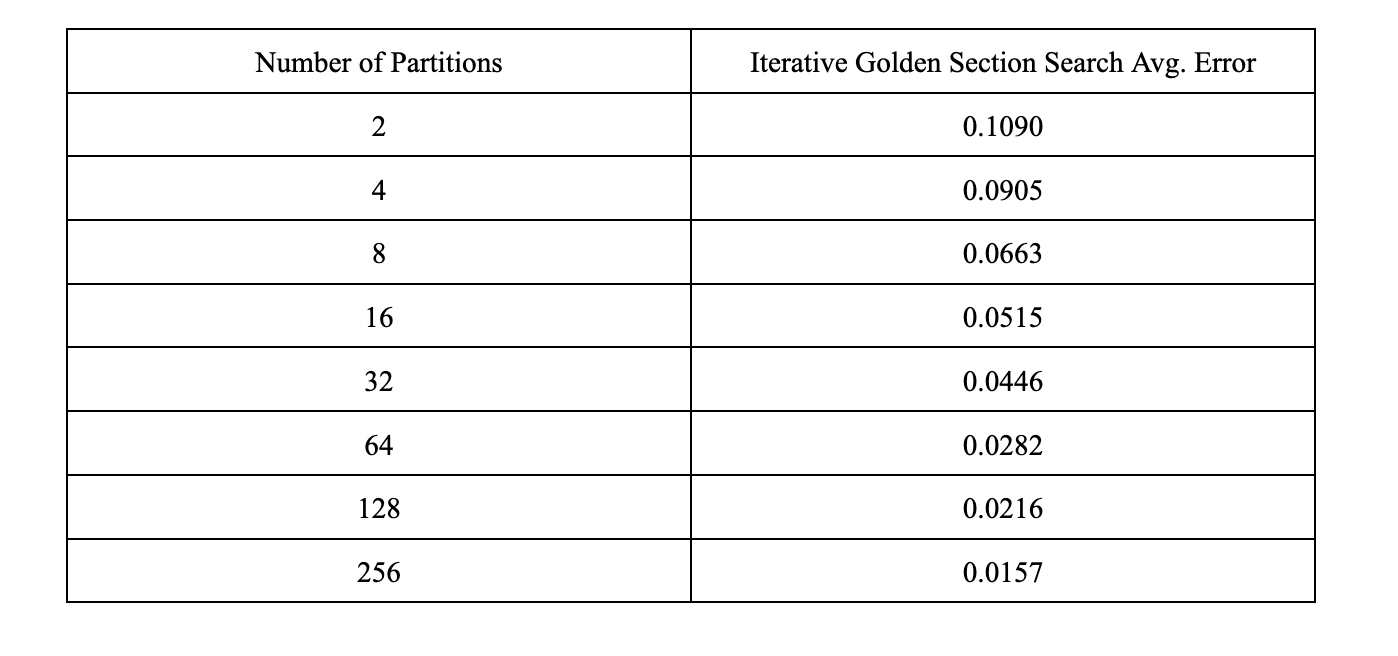}
  \label{fig:sub2}
\end{subfigure}
\end{table}

\subsection{Efficiency for Iterative GSS vs. MCB}
To test for the efficiency of both methods, we will disregard the actual accuracy, but focus on the time it takes for each algorithm to run. We do this by utilizing the time module in Python to track the run times of each algorithm. We will be using the same parameters as specified in the previous subsection, and the results are shown in the table below. 

\begin{table}[h]
\centering
\caption{Run Time Comparison for the Iterative and MCB Methods}
\begin{subfigure}{.5\textwidth}
  \centering
  \includegraphics[width=8cm]{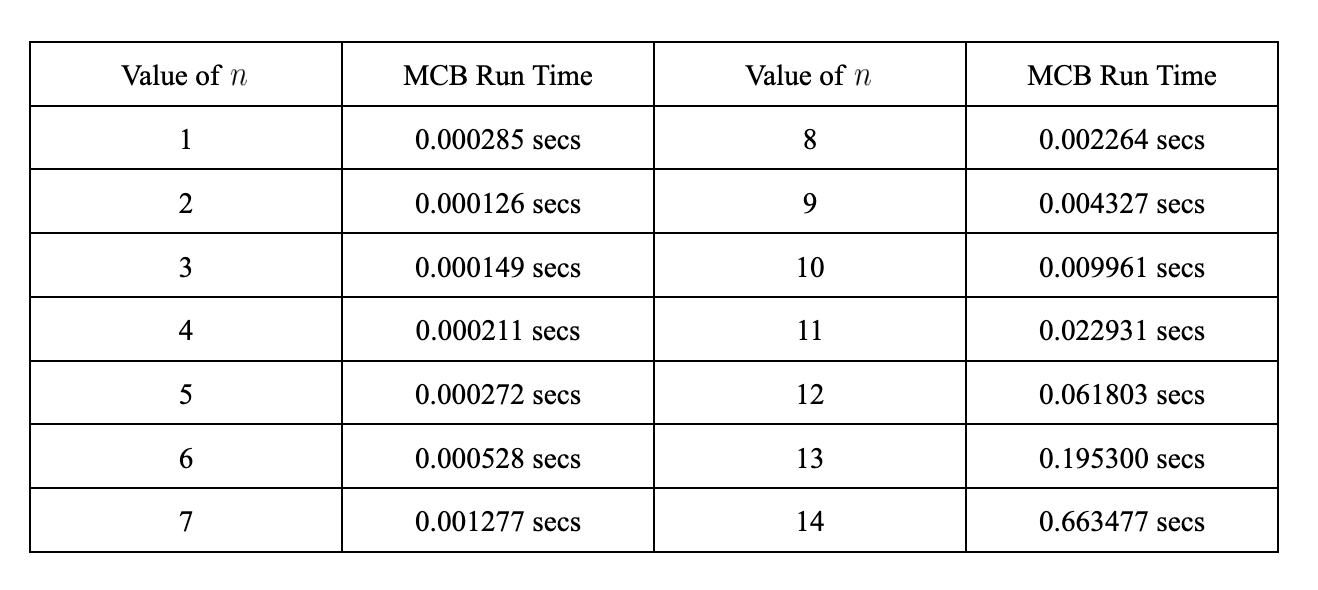}
  \label{fig:sub1}
\end{subfigure}%
\begin{subfigure}{.5\textwidth}
  \centering
  \includegraphics[width=7.6cm]{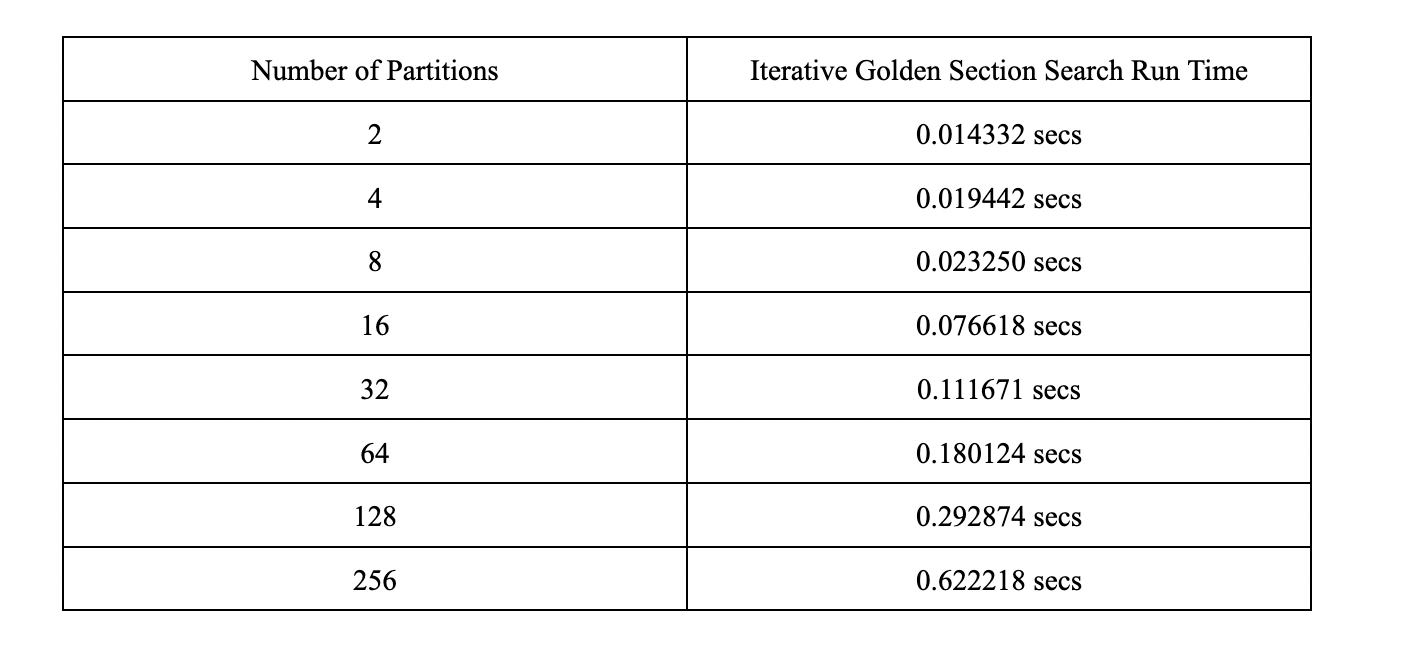}
  \label{fig:sub2}
\end{subfigure}
\end{table}

\section{Conformal mappings and the Harmonic measure } \label{sec:4}

Suppose that we have an open, connected domain in $\mathbbm{CP}^1 = \C \cup \{\infty\}$, called 
$\mathcal{U}$.
Suppose that the boundary $\partial \mathcal{U}$ is a  Jordan curve.
Given a point $z \in \mathcal{U}$, there exists a conformal mapping 
\begin{equation}
	\varphi : U(0;1) \to \mathcal{U}\, ,
\end{equation}
which is a bijection, and 
such that $\varphi(0) = z$. Moreover, the mapping is unique modulo pre-compositions with rotations.

Given all of this, one may define a probability measure $\omega(\cdot,\mathcal{U},z)$ on the Borel measurable subsets
$E$ 
of $\partial \mathcal{U}$ by pulling back to the rotation invariant uniform measure on the unit circle:
\begin{equation}
\label{eq:Invariance}
	\omega(E,\mathcal{U},\varphi(0))\, =\, \frac{1}{2\pi}\, |\varphi^{-1}(E)|\, 
=\, \frac{1}{2\pi} \int_0^{2\pi} \mathbf{1}_E(\varphi(e^{i\theta}))\, d\theta\, .
\end{equation}
This is called the Harmonic measure. 

Under general conditions there are two other equivalent formulations. Firstly, 
given a suitable boundary function $f : \partial U \to \R$, consider
the Laplace
equation for a function $\psi : \mathcal{U} \to \R$ with Dirichlet-type boundary conditions,
\begin{equation}
	\begin{cases} \Delta \psi\, =\, 0\, ,\ & \text{ in $\mathcal{U}$,}\\
\psi\, =\, f\, ,\ & \text{ on $\partial \mathcal{U}$.}
\end{cases}
\end{equation}
The unique solution is given by the formula
\begin{equation} 
	\psi(z)\, =\, \int_{\partial \mathcal{U}} f(\zeta)\, d\omega(\zeta,\mathcal{U},z)\, ,
\end{equation}
for all $z \in \mathcal{U}$.
This is the source of the name, ``Harmonic measure,'' to describe 
$\omega(\cdot,\mathcal{U},z)$.

The other equivalent formulation is as follows.
We can start a 2d Brownian motion at the point $z=x+iy$, viewing the 2d Brownian motion
as running on $\C$.
Then we stop the Brownian motion whenever it hits $\partial \mathcal{U}$.
The hitting time is 
\begin{equation}
	\tau(\partial \mathcal{U})\, =\, \min(\{t :\, B_t \in \partial \mathcal{U}\})\, .
\end{equation}
Then we have the formula for the Harmonic measure:
\begin{equation}
	\omega(E,\mathcal{U},z)\, =\, \mathbf{P}(\{B_{\tau(\partial \mathcal{U})} \in E\}\, |\, \{B_0 = z\})\, ,
\end{equation}
for any Borel measurable subset $E \subseteq \partial \mathcal{U}$.
That is our interest, here, as we explain next.

\subsection{Hitting distribution on a bridge for reflected B.m.\ on a strip}

\begin{figure}
\begin{center}
\begin{tikzpicture}[xscale=5,yscale=1]
	\draw (0.505,-3.25) node[yscale=-1] {\includegraphics[width=5.1cm,height=6.625cm]{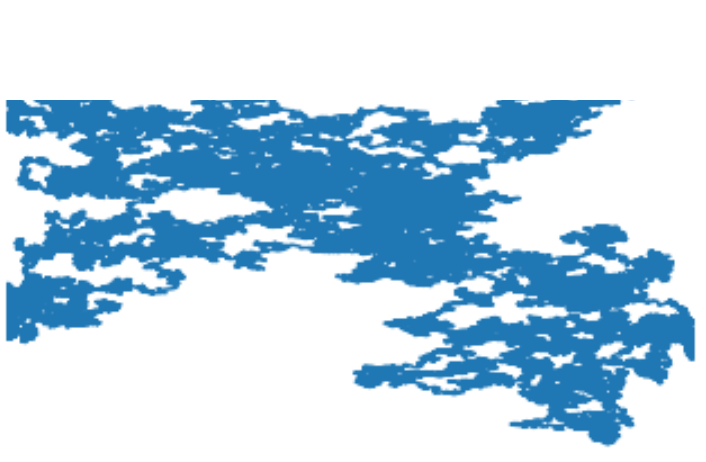}};
	\draw[very thick,red] (0,0) -- (0.25,1) -- (0.5,-0.75) -- (0.75,-0.375) -- (1,0);
	\draw[very thick,->] (0,0) -- (0,-5.375);
	\draw[very thick,->] (1,0) -- (1,-5.375);
\end{tikzpicture}
\end{center}
\caption{This is a schematic view of a walk, colored red, and the domain $\mathcal{U}$
consisting of the semi-infinite strip beneath the walk. We are interested in a reflected Brownian
motion diffusing up from $-\infty i$, colored blue. It is is stopped at the first time it hits the red walk.
We propose the hitting distribution as a guideline to the next bisection edge.
\label{fig:walk}
}
\end{figure}

Suppose that we have a walk of length $n$: a sequence of ordered pairs $(t_1,W_1)$, \dots, $(t_n,W_n)$ 
satisfying
$0=t_0<t_1<\dots<t_n=1$ and $W_1,\dots,W_n \in \R$ with the extra definition $W_0=0$.
Let us also assume that $W_n=0$ in order to have a bridge.
Define the piecewise linear interpolating function $\gamma : [0,1] \to \R$ 
using the formula 
\begin{equation}
\gamma(t)\, =\, \sum_{k=1}^n\mathbf{1}_{(t_{k-1},t_k]}(t) \cdot \left(\frac{t_k-t}{t_k-t_{k-1}}\,  W_{k-1} 
+ \frac{t-t_{k-1}}{t_k-t_{k-1}} W_{k}\right)\, .
\end{equation}
Then let us denote an open domain in $\C$ to be
\begin{equation}
	\mathcal{U}\, =\, \{x+iy\, :\, 0<x<1\, ,\ y<\gamma(x)\}\, .
\end{equation}
Let $B_t$ be a 2d Brownian motion. 
In order to constrain it to the strip where $0\leq x\leq 1$, let us apply the reflection
\begin{equation}
	\mathcal{R}(B_t)\, =\, \pi^{-1} \cos^{-1}\left(\pi \cos(\mathrm{Re}(B_t))\right) + i \mathrm{Im}(B_t)\, ,
\end{equation}
where $\operatorname{Re}(B_t) = (B_t + \overline{B}_t)/2$ and $\operatorname{Im}(B_t) = (B_t - \overline{B}_t)/(2i)$
are the usual real and imaginary parts.
Then $\mathcal{R}(B_t)$ is reflected Brownian motion constrained to the strip.
Let us define the top portion of $\partial \mathcal{U}$ as  $\Gamma$. Hence,
\begin{equation}
\Gamma\, =\, \{x+iy\, :\, 0\leq x\leq 1\, :\, y=\gamma(x)\}\, .
\end{equation}
Let us define a hitting time $\widetilde{\tau}(\Gamma)$ as
\begin{equation}
	\widetilde{\tau}(\Gamma)\, =\, \min(\{t\, :\, \mathcal{R}(B_t) \in \Gamma\})\, .
\end{equation}
Then, for each $z \in \mathcal{U}$, let us define $\widetilde{\omega}(\cdot,\Gamma,z)$ to be the measure on
Borel subsets of $\Gamma$ using the formula
\begin{equation}
	\widetilde{\omega}(E,\Gamma,z)\, =\, \mathbf{P}(\{\mathcal{R}(B_{\widetilde{\tau}(\Gamma)}) \in E\}\, |\, \{B_0=z\})\, .
\end{equation}
Given a boundary value function $f : \Gamma \to \R$ the solution of the Laplace equation 
for $\psi : \mathcal{U} \to \R$, with mixed boundary conditions
\begin{equation}
\begin{cases} \Delta \psi\, =\, 0\, ,\ & \text{ in $\mathcal{U}$,}\\
\partial \psi/\partial n\, =\, 0\, ,\ & \text{ on $\{iy\, :\, y<0\}$,}\\
\psi\, =\, f\, ,\ & \text{ on $\Gamma$,}\\
\partial \psi/\partial n\, =\, 0\, ,\ & \text{ on  $\{1+iy\, :\, y<0\}$,}
\end{cases}
\end{equation}
is given by $\psi(z) = \int_{\Gamma} f(\zeta)\, d\widetilde{\omega}(\zeta,\Gamma,z)$ for $z \in \mathcal{U}$.
This is a property of reflected Brownian motion which could also be seen using the ``method of images.''

We are primarily interested in the limit of $\widetilde{\omega}(\cdot,\Gamma,x+iy)$ as $y \to -\infty$,
 for any $x \in (0,1)$.
Let $\mathbb{H}^+$ denote the domain $\{x+iy\, :\, x \in \R\, ,\ y>0\}$.
Note that its  boundary is $\R \cup \{\infty\}$.
\begin{theorem}
\label{thm:beta}
Consider the unique conformal mapping $\varphi : \mathbb{H}^+ \to \mathcal{U}$
which is a bijection and such that
for the continuous extension to the boundaries, the homeomorphism \footnote{This exists by Caratheodory's theorem. See for example \cite{Krantz}.} satisfies: $\infty$ is mapped to $\infty$
and $[0,1]$ is mapped bijectively onto $\Gamma$.
Then $\widetilde{\omega}(\cdot,\Gamma,z)$ converges weakly as $|z| \to \infty$ to the measure $\widetilde{\omega}(\cdot,\Gamma,\infty)$ given by the formula
\begin{equation}
\label{eq:thm1}
	\widetilde{\omega}(E,\Gamma,\infty)\, =\, \frac{1}{\pi}\, \int_0^1 \frac{\mathbf{1}_E(\varphi(x))}{\sqrt{x(1-x)}}\, dx\, .
\end{equation}
\end{theorem}
\begin{proof}
Suppose we have a function $f : \Gamma \to \R$. Consider the function $u : \C \setminus [0,1]$ such that $u(z)$
is the expectation of $f(\varphi(X_{\tau[0,1])}))$ where $X_0$ starts at $z$. Then $u$ is Harmonic on $\C \setminus [0,1]$
and for any $x \in [0,1]$ we have $\lim_{z \to x} u(z) = f(\varphi(x))$. Moreover, by symmetry, the normal derivative of $u$ is zero along $(-\infty,0)$
and $(1,\infty)$. 
More precisely, $u(z) = u(\overline{z})$ by symmetry of the geometry.
Since $(-\infty,0)$ and $(1,\infty)$ are mapped to the left and right
edges of $\mathcal{U}$ under $\varphi$ we see that the normal derivative is preserved. So taking $\psi = u \circ \varphi^{-1}$,
it solves the defining PDE.
Finally, it is easy to calculate the Harmonic measure on $[0,1]$ as the boundary of $\C \setminus [0,1]$, for any starting point $z \in \C \setminus [0,1]$ by using a conformal mapping
between the interior of the unit circle and the exterior of the unit interval (a 2-to-1 mapping of the boundaries everywhere except the endpoints).
The $z \to \infty$ limit is the arcsine law.
\end{proof}

Our proposal is the following. Consider a function $g : [0,1] \to \R$ such that $g(0)=g(1)=0$.
Let us seek the minimum value of $g$.
We assume that evaluating $g$ at a point $x$ is computationally expensive.
We will use the bisection method, but we choose exactly 1 sub-interval constructed so far
to bisect.
The strategy for choosing is one of the following.
\begin{itemize}
\item {\em Strategy 1.} We calculate the measure $\omega(E_k,w,\infty)$ for each edge $E_k \in \Gamma$, where the points $(t_k,W_k)$
are the values of the graph of $g$ revealed so far: $W_k = g(t_k)$.
Then $w$ is the piecewise linear interpolation, as above, and $\Gamma$ is the curve associated to the graph.
$E_k$ is the line segment with endpoint $t_{k-1}+i W_{k-1}$ and $t_k + i W_k$.
Then we take the interval with the maximum measure.
\item {\em Strategy 2.} We use a Monte Carlo algorithm. We randomly choose interval $E_k$ with probability
$\omega(E_k,w,\infty)$.
\end{itemize}

The heuristic is that the reflected Brownian motion diffusing up from $-\infty i$ does a good job of predicting
the best interval to bisect at the next stage, based on the geometry already revealed.
We note that one could rescale the function $g$, replacing it with $\beta g$ for some $\beta \in [0,\infty)$.
Then that would alter the strategies. So $\beta$ could be considered a thermodynamic parameter
such as inverse-temperature, as occurs in Monte Carlo Markov chains.
%

\subsection{The Schwarz-Christoffel formula and SC toolbox}

In complex analysis a polygon on $\mathbb{CP}^1$ is a simple closed curve which is piecewise linear (in the standard geometry
of $\C \cup \{\infty\}$) with only finitely many vertices: $w_1,\dots,w_m$. One of the vertices may be $\infty$, in which case two of the edges
will be infinitely long. An example of such a polygon is given in Figure \ref{fig:walk}, with the walk and the semi-infinite strip below the walk being a polygon with one vertex at $\infty$ and 5 vertices.

For the domain $\mathcal{U}$,  the interior of the polygon, the conformal mapping from $\mathbb{H}^+$ to $\mathcal{U}$, 
may be formulated semi-explicitly by the Schwarz-Christoffel formula:
for some $A \in \C$ and $C \in \C \setminus \{0\}$,
\begin{equation}
	\varphi(z)\, =\, A + C \int^z \prod_{k=1}^{m-1} (\zeta - z_k)^{\alpha_k-1}\, d\zeta\, ,
\end{equation}
assuming that $\varphi(\infty) = w_m$. We take this formulation from Driscoll and Trefethen \cite{DriscollTrefethen}.
The numbers $z_1,\dots,z_n \in \R \cup \infty$ are the pre-vertices $\varphi(z_k) = w_k$ in
the continuous extension of $\varphi$ to the boundary.
The internal angles of the vertices are $\alpha_1 \pi,\dots,\alpha_n \pi$ satisfying $\alpha_k \in (0,2]$ for finite
vertices. If one of the vertices is $\infty$ then that interior angle satisfies $\alpha_k \in [-2,0]$.
This is semi-explicit because the pre-vertices must be determined.

As one can see in Figure \ref{fig:walk} our natural enumeration of the vertices is clockwise;
whereas, typically in mathematics one enumerates vertices in counterclockwise order.
Because of this, we need to use $\mathbb{H}^- = \{x+iy\, :\, x \in \R\, ,\ y<0\}$.
The Schwarz-Christoffel formula can still be written as 
\begin{equation}
	\varphi(z)\, =\, A + C \int^z \prod_{k=1}^{m-1} (\zeta - z_k)^{\alpha_k-1}\, d\zeta\, ,
\end{equation}
but where we use a branch of the logarithm going along the positive imaginary axis
such that we say $\arg(x)=0$ for $x>0$.
This forces $\arg(-x)=-\pi$ for $x>0$.
\begin{corollary}
\label{cor:arcsine}
Let $(t_1,W_1)$ to $(t_n,W_n)$ be as above.
Consider the polygonal boundary of $\mathcal{U}$ with vertices $w_1,\dots,w_{n+1},w_{n+2}$
(so $m=n+2$)
where $w_k = t_{k-1} + i W_{k-1}$ for $k=1,\dots,n+1$ and $w_{n+2} = \infty$.
Then for the Schwarz-Christoffel mapping from $\mathbb{H}^- \to \mathcal{U}$
such that $0=z_1<\dots<z_{n+1}=1$, we have for each of the edges $E_k$, $k=1,\dots,n$, joining $t_{k-1}+iW_{k-1}$
to $t_k+iW_k$,
\begin{equation}
	\widetilde{\omega}(E_k,\Gamma,\infty)\, =\, \sin^{-1}\left(\sqrt{z_{k+1}}\right) 
- \sin^{-1}\left(\sqrt{z_{k}}\right)\, .
\end{equation}
Thus, in our strategies, the necessary inputs are given just by the determination of the pre-vertices.
\end{corollary}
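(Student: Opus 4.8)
The plan is to read the corollary directly off Theorem~\ref{thm:beta} once the Schwarz--Christoffel data has been translated into a statement about sub-arcs of $\Gamma$. First I would record what changes, and what does not, when we pass from the half-plane $\mathbb{H}^+$ used in Theorem~\ref{thm:beta} to $\mathbb{H}^-$ used here: a conformal bijection $\mathbb{H}^- \to \mathcal{U}$ carrying $[0,1]$ onto $\Gamma$ with the prescribed endpoint correspondence is obtained from the map $\varphi$ of Theorem~\ref{thm:beta} by pre-composing with complex conjugation $z \mapsto \bar z$, and since conjugation fixes $\R$ pointwise the boundary values on $[0,1]$ are unchanged; hence the integral formula of Theorem~\ref{thm:beta} applies verbatim to the Schwarz--Christoffel map named in the corollary. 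Next, by Caratheodory's theorem $\varphi$ extends to a homeomorphism of the closures, and since it maps $[0,1]$ monotonically onto $\Gamma$ with $\varphi(z_j) = w_j = t_{j-1} + iW_{j-1}$ and $0 = z_1 < \dots < z_{n+1} = 1$, the closed line segment $E_k$ joining $w_k$ to $w_{k+1}$ is exactly $\varphi([z_k,z_{k+1}])$. Here I would pause to confirm that the clockwise enumeration of the vertices of $\mathcal{U}$ together with the branch convention $\arg(-x)=-\pi$ of Section~5.2 is precisely what forces the pre-vertices into increasing order, so that $E_k$ matches $[z_k,z_{k+1}]$ rather than a reversed or permuted arc.

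With this identification in hand I would apply Theorem~\ref{thm:beta} with $E = E_k$. Since $\varphi|_{[0,1]}$ is a bijection onto $\Gamma$, for $x \in (0,1)$ we have $\mathbf{1}_{E_k}(\varphi(x)) = \mathbf{1}_{[z_k,z_{k+1}]}(x)$, so
\begin{equation}
	\widetilde{\omega}(E_k,\Gamma,\infty)\, =\, \frac{1}{\pi}\int_{z_k}^{z_{k+1}} \frac{dx}{\sqrt{x(1-x)}}\, .
\end{equation}
It then remains only to evaluate this elementary integral, which I would do via the substitution $x = \sin^2\theta$, so that $\sqrt{x(1-x)} = \sin\theta\cos\theta$ and $dx = 2\sin\theta\cos\theta\,d\theta$ on the relevant range; equivalently one checks $\frac{d}{dx}\sin^{-1}(\sqrt{x}) = \tfrac{1}{2}\,(x(1-x))^{-1/2}$. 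This turns the right-hand side into a difference of $\sin^{-1}(\sqrt{\,\cdot\,})$ values at the endpoints $z_{k+1}$ and $z_k$, i.e.\ the arcsine law, which is the claimed formula; the final remark that the only algorithmic inputs are the pre-vertices is then immediate, since $z_k$ and $z_{k+1}$ are the sole data entering the expression.

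There is no genuine obstacle here --- this really is a corollary of Theorem~\ref{thm:beta}. The two points that need care are the bookkeeping ones: (i) verifying the orientation/branch conventions so that $E_k = \varphi([z_k,z_{k+1}])$ with $0 = z_1 < \dots < z_{n+1} = 1$, and (ii) tracking the overall normalizing constant in front of the $\sin^{-1}$ difference so that the total mass over all edges comes out to $1$, consistent with $\frac{1}{\pi}\int_0^1 (x(1-x))^{-1/2}\,dx = 1$; this last check is what pins down the constant in the displayed formula. Everything past that is a one-line computation.
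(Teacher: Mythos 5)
Your proposal is correct and follows exactly the paper's (one-line) proof: apply Theorem \ref{thm:beta} to $E=E_k=\varphi([z_k,z_{k+1}])$ and evaluate the resulting integral $\frac{1}{\pi}\int_{z_k}^{z_{k+1}}(x(1-x))^{-1/2}\,dx$ by the arcsine law. Note that your computation actually produces the prefactor $\tfrac{2}{\pi}$ in front of the difference of arcsines (which is what makes the total mass over all edges equal to $1$), a constant the corollary's displayed formula omits; your normalization check at the end correctly pins this down.
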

\begin{proof}
Use Theorem \ref{thm:beta}, and the well-known ``arcsine law.''
\end{proof}

The basic output of the SC toolbox is the list of pre-vertices for the conformal mapping of $U(0;1)$
to $\mathcal{U}$, where $U(0;1)$ is the open unit disk in $\C$, centered at $0$.
For this purpose, let us note a conformal mapping
$\Phi : U(0;1) \to \mathbb{H}^-$ is given by the formula
\begin{equation}
	\Phi(z)\, =\, -i\, \frac{1-z}{1+z}\, .
\end{equation}
If we labeled the prevertices as $Z_1,\dots,Z_{n+1},Z_{n+2}$ where $Z_{n+2}$ is the pre-vertex of $\infty$,
then we may first rotate the disk to insure that $Z_{n+2} = -1$,
which we assume was already done, without loss of generality.
Then applying $\xi_k=\Phi(Z_k)$, and then taking $z_k=a+c\xi_k$ for some $a \in \R$ and $c>0$ we have prevertices on $\R$ with $0=z_1<z_2<\dots<z_{n+1}=1$ and $z_{n+2}=\infty$.
This is what is needed in Corollary \ref{cor:arcsine}.

By this result, we can see the high value of the SC toolbox by Driscoll and Trefethen,
maintained by Driscoll, whose purpose
is exactly to calculate the pre-vertices.
It is written for Matlab.
\begin{figure}
\begin{center}
\raisebox{5cm}{\begin{minipage}{6cm}
\includegraphics[width=8cm,height=9cm]{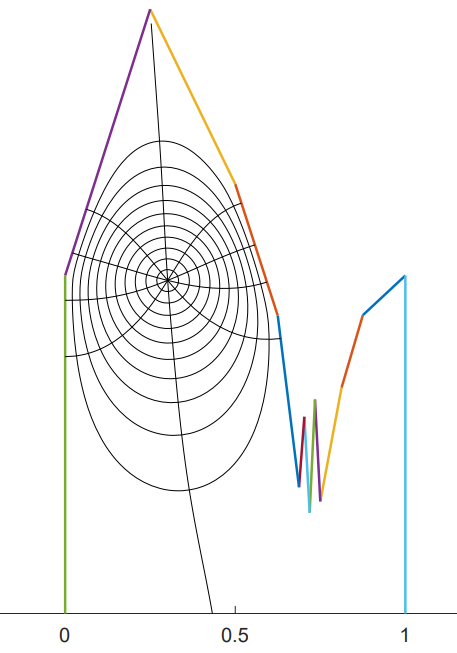}
\end{minipage}}
\hspace{2cm}
\includegraphics[width=7cm,height=11cm]{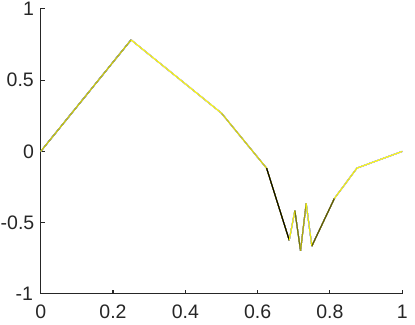}
\caption{This gives two perspectives on the same walk. On the left is the output of the SC toolbox for the conformal map from a disk, indicated by showing the image of the polar coordinate system. (The aspect ratio is not necessarily true.) For the picture on the left, the conformal center is chosen arbitrarily by the software package. On the right is the walk with the color map interpolating from yellow to black using the Harmonic measure of each edge to determine its color.
For the minimum Harmonic measure of an edge, that edge is colored yellow. For the maximum Harmonic measure of an edge, that edge is colored black. All other colors
are assigned according to the Harmonic measure, similarly.
\label{fig:scColor}
}
\end{center}
\end{figure}
In Figure \ref{fig:scColor} we show the output for a typical walk.
Note that if the points of the walk are $p_k = t_k + i \gamma(t_k) = t_k + i W_k$, then the method is as follows.
Because the SC toolbox requires angles as an input whenever one of the coordinates is $\infty$, we do as follows.
Let 
\begin{equation}
	\left(\frac{\Delta \gamma}{\Delta t}\right)_{k+\frac{1}{2}}\, =\, \frac{\gamma(t_{k+1}) - \gamma(t_k)}{t_{k+1}-t_k}\,
	=\, \frac{W_{k+1}-W_k}{t_{k+1}-t_k}\, =\, \frac{\mathrm{Im}[p_{k+1}-p_k]}{\mathrm{Re}[p_{k+1}-p_k]}\, .
\end{equation}
Then we may define $\eta_k^+$ and $\eta_{k+1}^-$ by the formula
\begin{equation}
	\pi \eta_k^+\, =\, \frac{\pi}{2} + \tan^{-1}\left(\left(\frac{\Delta \gamma}{\Delta t}\right)_{k+\frac{1}{2}}\right)\qquad
\text{ and }\qquad
	\pi \eta_{k+1}^-\, =\, \frac{\pi}{2} - \tan^{-1}\left(\left(\frac{\Delta \gamma}{\Delta t}\right)_{k+\frac{1}{2}}\right)\, ,
\end{equation}
as long as $k$ and $k+1$ are both in $0,\dots,n$.
Let us also define $\eta_0^-=\eta_n^+=0$.
Then we define $\alpha_k$ for $k \in \{1,\dots,n+1\}$ to be
\begin{equation}
	\alpha_k\, =\, \eta_{k-1}^+ + \eta_{k-1}^-\, ,
\end{equation}
where the mis-match in indices is due to the definitions in SC toolbox (which uses Matlab, whose indexing system
must always start with $1$ rather than $0$).
The angle for $\infty$ is usually irrelevant in the Schwarz-Christoffel formula, and in this case it will be $0$.
Note that
\begin{equation}
\sum_{k=0}^{n} (\eta_k^+ + \eta_k^-)\, =\, \sum_{k=1}^{n} \eta_k^- + \sum_{k=0}^{n-1} \eta_k^+\, =\, n\, ;
\end{equation}
whereas, there are a total of $n+2$ points. So $\sum_{k=1}^{n+2} (1-\alpha_k) = 2$, as is always required in the 
Schwarz-Christoffel formula. 
\begin{figure}
\begin{center}
\includegraphics[width=8cm]{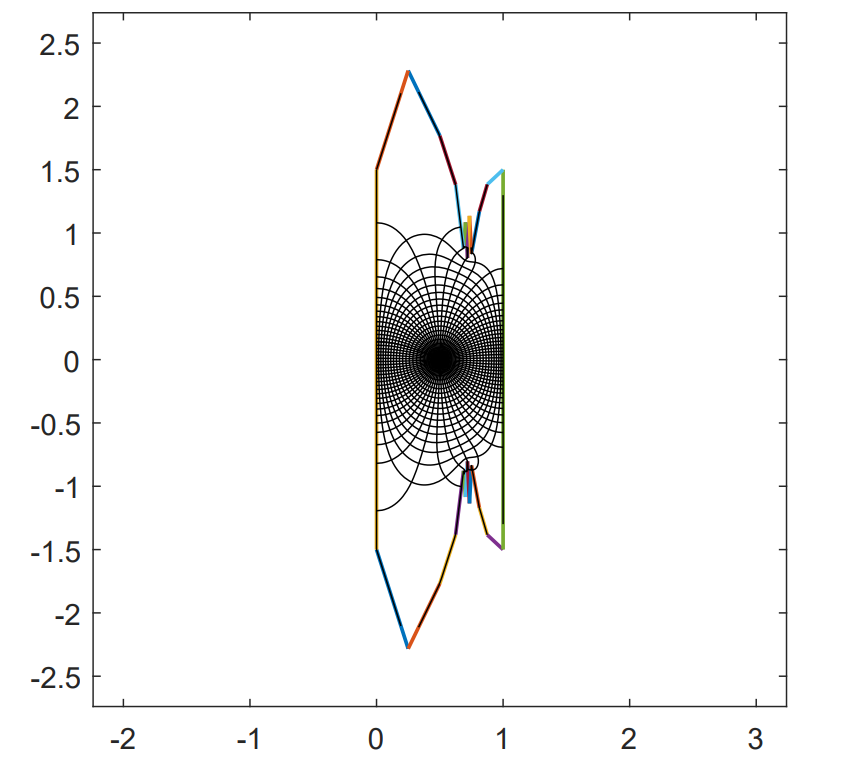}
\caption{This gives another perspective of the walk. We artificially moved the conformal center to a point of our own choosing by
creating a polygon with top-to-bottom reflection symmetry, instead of the more legitimate method wherein one of the coordinates
is at $-\infty$. This way, we do not need to calculate the angles, since the SC toolbox can calculate the angles itself if all vertices
are finite.
\label{fig:scCenter}
}
\end{center}
\end{figure}

\section{Applications and Outlook} \label{sec:5}

The main goal of considering the problem of online optimization of Brownian motion is pedagogical.
It serves as a toy model for more sophisticated problems of optimizing functions in higher dimensional
vector spaces.
However, there are also applications of this 1-dimensional problem directly.

In finance, any contingency claim based on a single stock is supposedly priced correctly according to the 
Black-Scholes-Mertons formula \cite{Hull}. As an example, a Cash-or-Nothing call option allows for a simple
bet that a stock price will exceed a certain price at a certain date (and time).
But in an American-style version one might just wish to make a bet that the stock price ever exceeds a given
strike price at any time from present to the expiration date. If so, a certain payoff may be exchanged at the 
expiration date. If one adjusts the stock price for interest rate, and volatility, this amounts to determining
whether the maximum of Brownian motion will pass a certain value.
One could imagine trying to price this option for a stock by querying a computerized library of all stock prices.

Another proxy for these types of problem is determining the distribution of the-maximum-minus-the-minimum
of Brownian motion (or a Brownian bridge), and its joint distribution  with the distance between the two times
where the maximum and minimum occur.
This is a reasonable question, but one for which an exact answer seems to be not forthcoming.
For the joint distribution of the maximum and the time of the maximum, a formula is known
for both the Brownian motion and the bridge using Girsanov's formula. See for example, Karatzas and Shreve 
\cite{KaratzasShreve}.
But for the quantity we mentioned, all that is known is a series formula equivalent to elliptic theta functions.
The joint distribution seems to not be known.
See for example \cite{ChoiRoh}.

This could be done numerically quite simply using the algorithms described herein.
(It is our intention to perform these calculations and post them as supplementary materials in the near future.)
Note that Mandelbrot also promoted the Cauchy process as a possibly more realistic alternative to Brownian motion
for certain financial applications. See for instance \cite{Mandelbrot}.
He viewed Brownian motion as too tame to adequately reflect financial movements.
But the Cauchy process may be too wild.

The Cauchy process is applicable in other models in science. 
For example, it is proposed as a model of phylogenetic fitness in branching models, compared
to real world data.
See Bastide and Didier \cite{BastideDidier}.
There questions about the maximum fitness over a particular evolutionary path
is relevant.
We note that the distribution of the maximum of a Cauchy process is solvable using the reflection principle.
But Girsanov's method is not as simple to apply. So the distribution of the maximum of a Cauchy bridge
may not be known.
This could be easily calculated numerically using our methods.
(It is our intention to perform these calculations and post them as supplementary materials in the near future.)

Finally, as an open question, we mention that the harmonic Monte Carlo algorithm could be extended to higher dimensions.
Harmonic measure in $d$-dimensions is a well-studied topic, which is natural, unlike conformal mappings
in higher dimensions.
We would like to explore this in the future for real world problems.
Of course, calculation of the Harmonic measure is computationally somewhat expensive, itself.
But for sufficiently complex problems, it may be less expensive than the cost savings in making fewer queries
of the function.

\subsection{A hybrid online search model}

\begin{figure}
    \centering
    \includegraphics[height=4cm]{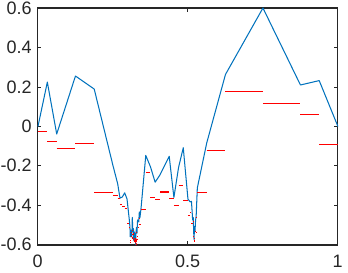}\hspace{1cm}
    \includegraphics[height=4cm]{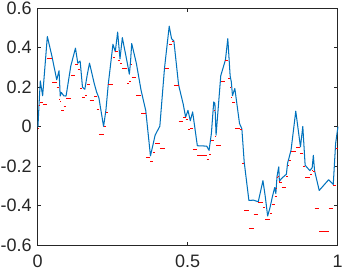}
    \caption{On the left we have output the sample points for a Brownian bridge according to our hybrid algorithm with $\beta=1$ and $n=100$. The red horizontal line segments
    are simulations of the minimum on each interval. 
    On the right we show the output where $\beta=0$,
    which is a proxy for a purely random sample of the endpoints (or for example, choosing the times along a lattice).}
    \label{fig:BBMS2}
\end{figure}
Motivated by the Harmonic Monte-Carlo scheme described above, we consider another algorithm to seek the minimum of a Brownian bridge.
\begin{itemize}
\item Start with the values at time $0$ and $1$, namely $X_0=X_1=0$.
\item Use the Brownian bridge distribution to sample a value at the midpoint $X_{1/2}$.
\item At the $n$th stage, when we have a list of times $0=t_0<t_1<\dots<t_n=1$, calculate the median values for $\min(\{X_t\, :\, t_{k-1}\leq t\leq t_k\})$ for each $k=1,\dots,n$.
\begin{itemize}
    \item Let $a$ be the minimum of all these medians. (In principle we should seek the median of the minimum instead. But the minimum of the medians is easier to calculate, and we believe is a good stand-in
    for large iterations.)
    \item For each $k=1,\dots,n$ let $f_k$ be the probability that $\min(\{X_t\, :\, t_{k-1}\leq t\leq t_k\})$ is less than or equal to $a$.
    \item Form a probability mass function by taking $p_k = f_k^{\beta} (t_k-t_{k-1})/Z_n$ for a fixed, chosen $\beta \in [0,\infty)$, where $Z_n$ is a normalization constant.
    \item Select the next interval to bisect randomly according to the mass function $p_1,\dots,p_n$. Then repeat the algorithm at the next step.
\end{itemize}
\end{itemize}
It might seem more reasonable, instead of using $\min(\{X_t\, :\, t_{k-1}\leq t\leq t_k\})$ for several intermediate steps to just use $X_{(t_{k-1}+t_k)/2}$.
But it is analytically easier to deal with the minimum due to the simplicity arising from the reflection principle. (The cumulative distribution function of the minimum
is expressible in terms of the pdf of Brownian motion.)
The two should be close for small intervals, in any case, which is asymptotically what the algorithm will focus on for large iterations.
\begin{figure}
    \centering
    \includegraphics[height=4cm]{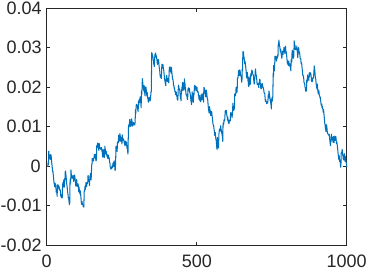}\hspace{1cm}
    \includegraphics[height=4cm]{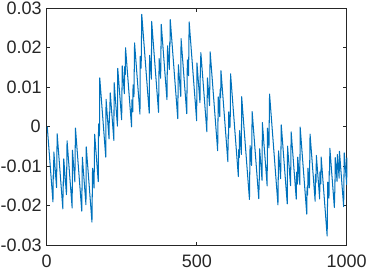}
    \caption{Here we show the Kolmogorov-Smirnov statistic
    for that distribution using our hybrid scheme. On the left is the result of 1000 samples for $\beta=1$ and $n=50$. It looks like a Brownian
    bridge, as is should according to the KS test. On the right is the same output for $\beta=0$ as a comparison.}
    \label{fig:KS}
\end{figure}
In Figure \ref{fig:BBMS2} we show the output for $\beta=1$, which is the value of the parameter we consider to be most natural.
As a benchmark, we also show a comparison plot for $\beta=0$, which is purely random selection of $t_k$ points
({\em a priori} measure is uniform).

We also show another perspective. We used our hybrid algorithm to sample the argmin's of the Brownian bridge. By the cycle method, it is well
known that this is a uniform distribution. See Chung and Feller's second theorem \cite{ChungFeller, Feller}. Another insightful perspective is 
by Gessel and Huq \cite{GesselHuq}.
In Figure \ref{fig:KS}, we plot the difference between our empirical distribution for 1000 samples, and the exact uniform distribution.
According to the Kolmogorov-Smirnov test this should be a Brownian bridge, itself (in the limit of infinitely many sample points). 
See, for example, Dudley, Proposition 12.3.2 \cite{Dudley}.

We can also carry out the same type of analysis for the Cauchy bridge. The reflection principle still applies, as it does to all symmetric
alpha-stable processes (or symmetric L\'evy processes, more generally). The harder part is calculating the Bayes formula for the conditional
distribution of a midpoint. If we assume that $X_2=x$ and write $u=x/2$, and $v=X_1-u$, then the conditional cumulative distribution function for $v$ is given by equation (\ref{eq:Bayes}).
As we stated before, we do not know a formula for the inverse of the CDF as would be needed to simulate this distribution using a uniform
random variable.
But if we let $U$ be a uniform random variable on $(0,1)$, and we use an equation solver (Newton method) to find the $v$
such that $G(u,v)=U$, then calling that value of $v$ by $V$, we see that $V$ has the proper distribution.
\begin{figure}
    \centering
    \includegraphics[height=4cm]{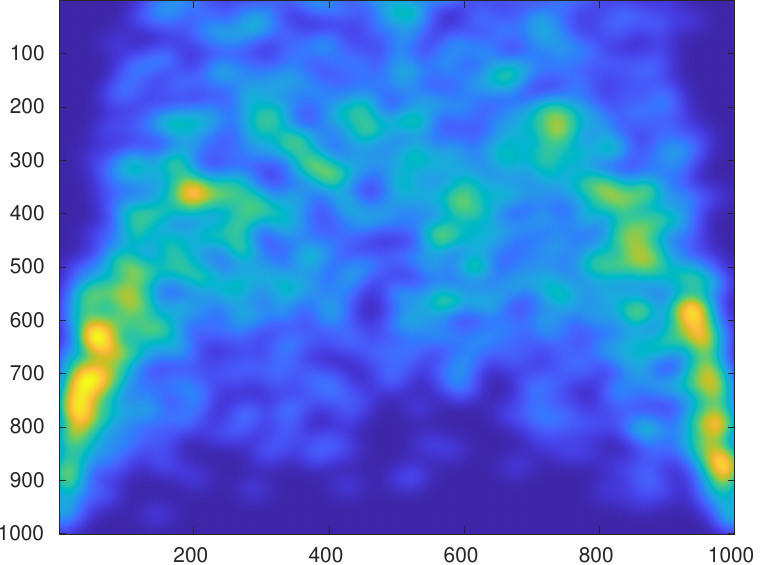}\hspace{1cm}
    \includegraphics[height=4cm]{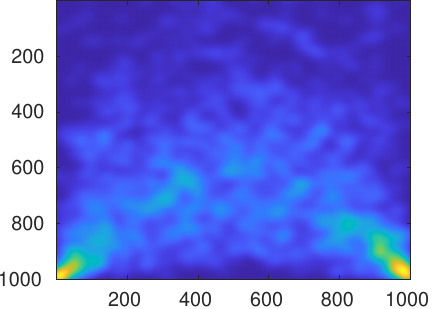}
    \caption{Here we plot the scatter plot ($N=1000$) of the coordinate pairs of argmin and the negative of the minimum value.
    On the left is the result of the Brownian bridge. On the right is the result for the Cauchy bridge. We used imagesc in Matlab to generate these pictures. (The brightness is not the same in the two pictures.) The $x$-values label the argmin, whose range is $0$ to $1$. The $y$-value labels the negative of the minimum value. For the $y$-values we restricted the range to $0$ to $1$.}
    \label{fig:scatter}
\end{figure}

In Figure \ref{fig:scatter}, we show a scatter plot of coordinate pairs (spread out using a Gaussian filter) for both the Brownian bridge
and the Cauchy bridge. The joint distribution for the Brownian bridge is known. We would like to refer to Felix Xiao's Github page
$$
\text{\url{https://felixxiao.github.io/2018/01/brownian-bridge}}
$$
which is a convenient resource even though all the information there is available in textbooks, such as Karatzas and Shreve.
For the marginal on the argmin, both are uniform by symmetry, as in the cycle method.
It appears that the Cauchy bridge 

In Figure \ref{fig:CBrdgeMinDist} we show the result of the empirical distribution of the minimum of a Cauchy bridge. We have used our hybrid algorithm. We include the Matlab file in Appendix \ref{app:MatlabFiles}.
One notable feature is that the pdf appears to be non-vanishing at $0$. Compare to the cumulative distribution function
for a Brownian bridge, which is $1-e^{-2a^2}$, so that the derivative, pdf, vanishes at $0$.
\begin{figure}
    \centering
    \includegraphics[height=4cm]{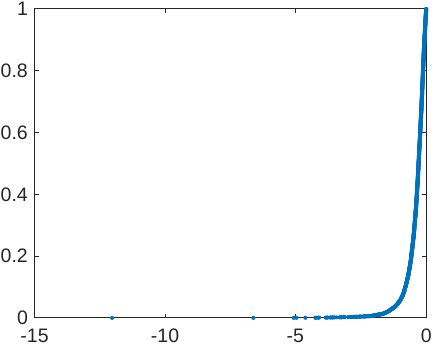}\hspace{1cm}
    \includegraphics[height=4cm]{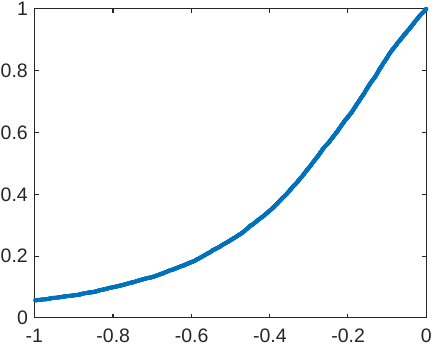}
    \caption{Here we plot the empirical cumulative distribution function for the minimum of the Cauchy bridge, using 10,000 samples, each made using our hybrid algorithm with Ntot=1000 and $\beta=1$. The right picture is a zoom-in of the picture on the left, showing the behavior near 0.}
    \label{fig:CBrdgeMinDist}
\end{figure}
We note that the marginal on the argmin is uniform for the Cauchy bridge, by the same arguments as for the Brownian bridge: the cycle method.
The pictures of the Kolmogorov-Smirnov test are qualitatively similar to KS-test pictures for the Brownian bridge.

\appendix

\section{Limiting $\beta$ cases of the Harmonic MC strategy}

We start by considering the limit $\beta \to 0^+$.
For simplicity of notation, let us denote
\begin{equation}
	\left(\frac{\Delta \gamma}{\Delta t}\right)_{k+\frac{1}{2}}\, =\, \frac{\gamma(t_{k+1})-\gamma(t_k)}
{t_{k+1}-t_k}\, .
\end{equation}
If we replace $\gamma$ by $\beta \gamma$, then it is easy to see that for $k=1,\dots,n-1$
\begin{equation}
	\alpha_{k+1} - 1\, \sim\, \frac{\beta}{\pi}\, \left(\left(\frac{\Delta \gamma}{\Delta t}\right)_{k+\frac{1}{2}}
- \left(\frac{\Delta \gamma}{\Delta t}\right)_{k-\frac{1}{2}}
\right)\, ,
\end{equation}
as $\beta \to 0^+$.
Until otherwise noted, all asymptotic formulas are in the limit $\beta \to 0^+$,
and we will stop writing that.
Then also
\begin{equation}
	\alpha_{1} - \frac{1}{2}\, \sim\, \frac{\beta}{\pi}\, \left(\frac{\Delta \gamma}{\Delta t}\right)_{\frac{1}{2}}\qquad \text{ and }\qquad
	\alpha_{n+1} - \frac{1}{2}\, \sim\, -\frac{\beta}{\pi}\, \left(\frac{\Delta \gamma}{\Delta t}\right)_{n-\frac{1}{2}}\, .
\end{equation}
Then also $\alpha_{n+2}=0$, but that is not needed for the Schwarz-Christoffel formula because
we chose $z_{n+2}=\infty$. Let us choose the base-point for the integral to be $0$. Then
\begin{equation}
	\varphi(z)\, =\, A + C \int_0^z \prod_{k=1}^{n+1} (\zeta - z_k)^{\alpha_k-1}\, d\zeta\, .
\end{equation}
Then we note that, choosing $z_1=0$, $z_{n+1}=1$ and $z_{n+2}=\infty$ (as described before)
we have
\begin{equation}
\prod_{k=1}^{n+1} (\zeta - z_k)^{\alpha_{k}-1}\, =\, 
(\zeta(\zeta-1))^{-1/2} \exp\left(\frac{\beta}{\pi} \sum_{k=0}^{n} \left(
\left(\frac{\Delta \gamma}{\Delta t}\right)_{k+\frac{1}{2}}
- \left(\frac{\Delta \gamma}{\Delta t}\right)_{k-\frac{1}{2}}\right) \log(\zeta-z_{k+1})
\right)\, ,
\end{equation}
where now we define $(\Delta \gamma/\Delta t)_{-1/2}=(\Delta \gamma/\Delta t)_{n+\frac{1}{2}}=0$.
From this we see that we will have $C = -i|C|$ (by consideration of the case $z \to \pm \infty$
along the real axis which is the top of $\mathbb{H}^-$).

With our choice of base-point of the integral, we have $A = 0$. So we may rewrite
\begin{equation}
	\varphi(z)\, =\, - i|C| \int_0^z \prod_{k=1}^{n+1} (\zeta - z_k)^{\alpha_k-1}\, d\zeta\, .
\end{equation}
Since the arcsine law gives the correct formula for $\beta=0$, we must have $|C|-\pi^{-1} = o(1)$
as $\beta \to 0^+$. We will only want to keep the leading order correction term. So we simply replace $|C|$
by $\pi^{-1}$ on the right-hand-side.
For the left-hand-side, let us write
\begin{equation}
	C\, =\, \frac{1}{\pi}\, (1+c)
\end{equation}
where it is understood $c = o(1)$ as $\beta \to 0^+$.
Then Taylor expanding the exponent, using the fact that $\beta$ is small, we have
\begin{equation}
\begin{split}
	\varphi(z) - \frac{2}{\pi}\, \sin^{-1}\left(\sqrt{z}\right)\, &\sim\, \frac{2c}{\pi}\, \sin^{-1}\left(\sqrt{z}\right)\\
&
-\frac{i\beta}{\pi^2}\,
\sum_{k=0}^{n} \left(\left(\frac{\Delta \gamma}{\Delta t}\right)_{k+\frac{1}{2}}
- \left(\frac{\Delta \gamma}{\Delta t}\right)_{k-\frac{1}{2}}\right) 
\int_0^z \frac{\log(\zeta-z_{k+1})}{\sqrt{\zeta(\zeta-1)}}\, d\zeta\, .
\end{split}
\end{equation}
From consideration of $z \to \pm \infty$ along the real axis, we can see that $c$ is real.

We will focus attention on the boundary values, and more precisely $0\leq z\leq 1$.
Therefore, we use the branch of the logarithm already written to change the $1/\sqrt{\zeta(\zeta-1)}$
into $1/(-i\sqrt{\zeta(1-\zeta)})$.
So for $0\leq x\leq 1$, we have
\begin{equation}
\begin{split}
	\varphi(x) - \frac{2}{\pi}\, \sin^{-1}\left(\sqrt{x}\right)\, &\sim\, \frac{2c}{\pi}\, \sin^{-1}\left(\sqrt{x}\right)\\
&
+\frac{\beta}{\pi^2}\,
\sum_{k=0}^{n} \left(\left(\frac{\Delta \gamma}{\Delta t}\right)_{k+\frac{1}{2}}
- \left(\frac{\Delta \gamma}{\Delta t}\right)_{k-\frac{1}{2}}\right) 
\int_0^x \frac{\log(y-z_{k+1})}{\sqrt{y(1-y)}}\, dy\, .
\end{split}
\end{equation}
From this we can see that the leading order value of $z_k$ is
\begin{equation}
z_k^{(0)}\, =\, \sin^2\left(\frac{\pi t_{k-1}}{2}\right)\, .
\end{equation}
This may be used on the right-hand-side of our formula so far, because it is already of order $O(\beta)$
as $\beta \to 0^+$. 
But we are also interested in the leading order behavior of the correction $\xi_k$ in the 
expansion
\begin{equation}
	z_k\, =\, z_k^{(0)} + \xi_k\, ,
\end{equation}
that shows up on the right-hand-side.

So if we take $\ell \in \{2,\dots,n\}$ and replace $x$ by $z_{\ell} = z_{\ell}^{(0)}+\xi_{\ell}$
on the left-hand-side and by $z_{\ell}^{(0)}$ on the right-hand-side, we have
\begin{equation}
\begin{split}
	i\beta \gamma(t_{\ell-1})
-\frac{\xi_{\ell}}{\pi\sqrt{t_{\ell-1}(1-t_{\ell-1})}}\, 
&\sim\, ct_{\ell-1}\\
&\hspace{0.25cm} +\frac{\beta}{\pi^2}\,
\sum_{k=0}^{n} \left(\left(\frac{\Delta \gamma}{\Delta t}\right)_{k+\frac{1}{2}}
- \left(\frac{\Delta \gamma}{\Delta t}\right)_{k-\frac{1}{2}}\right) 
\int_0^{z_{\ell}^{(0)}} \frac{\log(y-z_{k+1}^{(0)})}{\sqrt{y(1-y)}}\, dy\, .
\end{split}
\end{equation}
Let us first consider the imaginary part of both sides. Since $\xi_{\ell}$ must be real
and since $c$ is real, both of those terms are gone. Therefore, the equation will not involve
our free parameters. So we must just have an identity which is true by inspection.
\begin{equation}
\begin{split}
\beta \gamma(t_{\ell-1})\, 
\sim\, \frac{\beta}{\pi^2}\,
\sum_{k=0}^{n} \left(\left(\frac{\Delta \gamma}{\Delta t}\right)_{k+\frac{1}{2}}
- \left(\frac{\Delta \gamma}{\Delta t}\right)_{k-\frac{1}{2}}\right) 
\int_0^{z_{\ell}^{(0)}} \frac{\operatorname{Im}[\log(y-z_{k+1}^{(0)})]}{\sqrt{y(1-y)}}\, dy\, .
\end{split}
\end{equation}
Using our branch of the logarithm (which was imposed to have validity of the SC formula), this gives
\begin{equation}
\begin{split}
\beta \gamma(t_{\ell-1})\, 
\sim\, -\frac{\beta}{\pi}\,
\sum_{k=0}^{n} \left(\left(\frac{\Delta \gamma}{\Delta t}\right)_{k+\frac{1}{2}}
- \left(\frac{\Delta \gamma}{\Delta t}\right)_{k-\frac{1}{2}}\right) 
\int_0^{\min(\{z_{\ell}^{(0)},z_{k+1}^{(0)}\})} \frac{1}{\sqrt{y(1-y)}}\, dy\, .
\end{split}
\end{equation}
Since the integral of $\pi^{-1} (y(1-y))^{-1/2}$ is the arcsine law, and since
the 0th-order pre-vertices were defined to evaluate to the $t$-values under this mapping, 
we see that
\begin{equation}
\begin{split}
\beta \gamma(t_{\ell-1})\, 
\sim\, -\beta\,
\sum_{k=0}^{n} \left(\left(\frac{\Delta \gamma}{\Delta t}\right)_{k+\frac{1}{2}}
- \left(\frac{\Delta \gamma}{\Delta t}\right)_{k-\frac{1}{2}}\right) 
\min(\{t_{\ell-1},t_{k}\})\, .
\end{split}
\end{equation}
But then by summation-by-parts (using $(\Delta \gamma/\Delta t)_r=0$ for $r=-1/2$ and $r=n+\frac{1}{2}$)
\begin{equation}
\begin{split}
\beta \gamma(t_{\ell-1})\, 
\sim\, \beta\,
\sum_{k=0}^{n} \left(\frac{\Delta \gamma}{\Delta t}\right)_{k+\frac{1}{2}}
\left(\min(\{t_{\ell-1},t_{k+1}\})- \min(\{t_{\ell-1},t_{k}\})\right)\, .
\end{split}
\end{equation}
In turn, this can be written as 
\begin{equation}
\begin{split}
\beta \gamma(t_{\ell-1})\, 
\sim\, \beta\,
\sum_{k=0}^{\ell-2} \left(\frac{\Delta \gamma}{\Delta t}\right)_{k+\frac{1}{2}}
\left(t_{k+1}-t_{k}\right)\, =\, \beta \sum_{k=0}^{\ell-2} \left(\gamma(t_{k+1})-\gamma(t_k)\right)\, .
\end{split}
\end{equation}
Since $\gamma(t_{0})=0$, this is a tautologically true equation, using a telescoping sum.
For the imaginary part, the same method would work if we went all the way up to $\ell=n+1$.
So, as required, the imaginary part took care of itself.

Now, for the real part, we have free parameters and we are just trying to determine what those 
parameters are equal to. For $\ell=2,\dots,n$ we have
\begin{equation}
\begin{split}
-\frac{\xi_{\ell}}{\pi\sqrt{t_{\ell-1}(1-t_{\ell-1})}}\, 
&\sim\, ct_{\ell-1}+\frac{\beta}{\pi^2}\,
\sum_{k=0}^{n} \left(\left(\frac{\Delta \gamma}{\Delta t}\right)_{k+\frac{1}{2}}
- \left(\frac{\Delta \gamma}{\Delta t}\right)_{k-\frac{1}{2}}\right) 
\int_0^{z_{\ell}^{(0)}} \frac{\ln(|y-z_{k+1}^{(0)}|)}{\sqrt{y(1-y)}}\, dy\, .
\end{split}
\end{equation}
Let us change variables by taking $y = \sin^2(\pi s/2)$. Then we obtain
\begin{equation}
\begin{split}
-\frac{\xi_{\ell}}{\pi\sqrt{t_{\ell-1}(1-t_{\ell-1})}}\, 
&\sim\, ct_{\ell-1}\\
&\hspace{-1cm}+\frac{\beta}{\pi}\sum_{k=0}^{n} \left(\left(\frac{\Delta \gamma}{\Delta t}\right)_{k+\frac{1}{2}}
- \left(\frac{\Delta \gamma}{\Delta t}\right)_{k-\frac{1}{2}}\right) 
\int_0^{t_{\ell-1}} \ln\left(\left|\sin^2\left(\frac{\pi s}{2}\right)
-\sin^2\left(\frac{\pi t_{k}}{2}\right)\right|\right)\, ds\, .
\end{split}
\end{equation}
Note that if we set $\ell=1$ then the right-hand-side would be $0$, no matter what value we have for $c$.
We set $z_1=0$ and $z_{n+1}=1$, exactly. So we must have $\xi_1=\xi_{n+1}=0$.
So taking $\xi_{n+1}=0$ and substituting in $\ell=n+1$, we obtain
the asymptotic expression for $c$:
\begin{equation}
	c\, \sim\, 
-\frac{\beta}{\pi}\sum_{k=0}^{n} \left(\left(\frac{\Delta \gamma}{\Delta t}\right)_{k+\frac{1}{2}}
- \left(\frac{\Delta \gamma}{\Delta t}\right)_{k-\frac{1}{2}}\right) 
\int_0^{1} \ln\left(\left|\sin^2\left(\frac{\pi s}{2}\right)-\sin^2\left(\frac{\pi t_{k}}{2}\right)\right|\right)\, ds\, .
\end{equation}
Then, using that, for $\ell =2,\dots,n$ we obtain (after more algebraic manipulations)
\begin{equation}
\begin{split}
\xi_{\ell}\, 
&\sim\, \beta (1-t_{\ell-1})^{3/2} t_{\ell-1}^{1/2}
 \sum_{k=0}^{n}\left(\left(\frac{\Delta \gamma}{\Delta t}\right)_{k+\frac{1}{2}}
- \left(\frac{\Delta \gamma}{\Delta t}\right)_{k-\frac{1}{2}}\right)
\int_0^{t_{\ell}} \ln\left(\left|\sin^2\left(\frac{\pi s}{2}\right)-\sin^2\left(\frac{\pi t_{k}}{2}\right)\right|
\right)\, ds\\
&\hspace{-0.5cm} -\beta (1-t_{\ell-1})^{1/2} t_{\ell-1}^{3/2}
\sum_{k=0}^{n}\left(\left(\frac{\Delta \gamma}{\Delta t}\right)_{k+\frac{1}{2}}
- \left(\frac{\Delta \gamma}{\Delta t}\right)_{k-\frac{1}{2}}\right) 
\int_0^{1-t_{\ell}} \ln\left(\left|\sin^2\left(\frac{\pi s}{2}\right)-\sin^2\left(\frac{\pi (1-t_{k})}{2}\right)\right|
\right)\, ds\, .
\end{split}
\end{equation}
In principle, the integrals could be performed in terms of elementary functions and dilogarithms.

\subsection{The large $\beta$ limit: partial results}
For the $\beta \to \infty$ limit, the analysis is more involved.
The measure concentrates on edges incident to minimal vertices.
If there is a unique minimum vertex, located at $a$, then the complex analysis is fairly simple: i.e., if 
\begin{equation}
    \exists k \in \{1,\dots,n-1\}\, ,\ \text{ s.t. }\ W_k<0\ \text{ and }\ \Big( \forall j \in \{1,\dots,n-1\}\setminus \{k\}\ \text{ we have }\ 
        W_j>W_k\Big)\, ,
\end{equation}
and we let $a = t_k$.
Define a function $\varphi : \mathbb{H}^- \to \C$ by the formula
\begin{equation}
    \varphi(a+u)\, =\, a + \frac{1}{\pi}\, \left( (1-a)\, \log\left(\frac{1-a}{1-a-u}\right) - a \log\left(\frac{a+u}{a}\right) \right)\, .
\end{equation}
Then the range is $\mathcal{V} \setminus \Sigma(a)$ where $\mathcal{V}$ is the infinite strip $\{x+iy\, :\, 0<x<1\, ,\ y\in \R\}$ and $\Sigma(a)$ is the ``spike,'' $\{a+iy\, :\, y\geq 0\}$.
\begin{figure}
    \centering
    \begin{tikzpicture}[xscale=5,yscale=1]
        \draw (0.505,-3.25) node[yscale=-1] {\includegraphics[width=5.1cm,height=6.625cm]{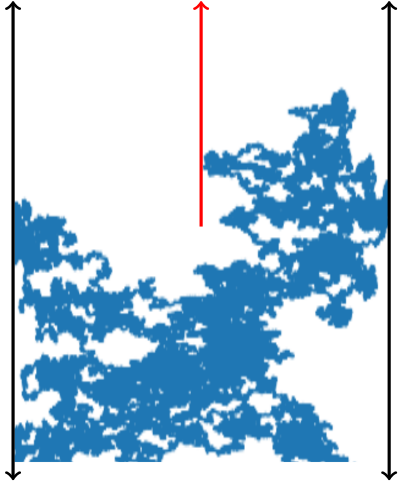}};
        \draw [very thick,red,->] (0.5,-2) -- (0.5,1);
	\draw[very thick,<->] (0,1) -- (0,-5.375);
	\draw[very thick,<->] (1,1) -- (1,-5.375);
    \end{tikzpicture}
    \caption{Here we plot the infinite strip $\mathcal{V} = \{x+iy\, :\, 0<x<1\, ,\ y\in \mathbb{R}\}$ minus the spike $\Sigma(a) = \{a+iy\, :\, y\geq 0\}$. The question is, for a Brownian motion on $\mathcal{V}$, reflecting off the left and right walls, when it hits $\Sigma(a)$
    will it hit from the left or the right side, and at what height $y>0$ will it hit?}
    \label{fig:Spike}
\end{figure}
Then it is easy to see $\varphi'(a)=0$ and $\varphi$ maps $(0,a)$ and $(a,1)$ to the left and right sides of the spike $\Sigma(a)$.
It maps $(-\infty,0)$ and $(1,\infty)$ to the lines $\{0+iy\, :\, y \in \R\}$ and $\{1+iy\, :\, y \in \R\}$.
Note that the branch of the logarithm is such that the logarithm of a unit length complex number (other than $-1$) is between $-i\pi$ and $+i\pi$.

The reason that is relevant is that as $\beta \to \infty$, using the point $a-i\beta W_k$ as the new origin, the local picture will
converge to $\mathcal{V} \setminus \Sigma(a)$. From all of this, we see that 
two edges $(t_{k-1},t_k)$ and $(t_k,t_{k+1})$, incident to $t_k$, are charged with measures
converging to $\frac{2}{\pi} \sin^{-1}(\sqrt{1-a})$ and $\frac{2}{\pi} \sin^{-1}(\sqrt{a})$, respectively.
(We came to this surmise after considering the asymptotics of the Schwarz-Christoffel mapping: there is a way to derive this formula, which
we skip here out of space considerations.)

We have not yet determined the formula for two spikes $\Sigma(a_1)$ and $\Sigma(a_2)$ corresponding to two numbers $k_1<k_2$ with
$k_2-k_1>1$ such that $t_{k_1}$ and $t_{k_2}$ are two points with equal values, equal to the absolute minimum, and such that $W_j>W_{k_1}=W_{k_2}$
for all $j \in \{1,\dots,n-1\} \setminus \{k_1,k_2\}$.
(Note that if $k_2=k_1+1$ then the entire edge would be a set of infinitely local minima, which might be a separate case to consider.)
We hope to do this soon, as well as to determine the analogous formulas
for all higher number of equal absolute minima.

\section{Matlab files for the hybrid models}
\label{app:MatlabFiles}

Here we include the Matlab files of the Hybrid models introduced in the Outlook section.

\subsection{The hybrid model for the Brownian bridge}

{
\footnotesize
\begin{verbatim}
Ntot=100;
beta=0;
xLst = [0,0];
tLst = [0,1];
mdPtLst = (xLst(1)+xLst(2))/2;
DelxLst = xLst(2)-xLst(1);
DelTLst= tLst(2)-tLst(1);
Znew = randn;
zNew = mdPtLst(1)+sqrt(DelTLst/4)*Znew;
xLst = [xLst(1),zNew,xLst(2)];
tLst = [tLst(1),(tLst(1)+tLst(2))/2,tLst(2)];
for nCtr=2:Ntot
    mdPtLst=(xLst(1:nCtr)+xLst(2:(nCtr+1)))/2;
    DelxLst=(xLst(2:(nCtr+1))-xLst(1:nCtr))/2;
    DelTLst=(tLst(2:(nCtr+1))-tLst(1:nCtr));
    medLst = zeros(1,nCtr);
    for kCtr=1:nCtr
        medLst(kCtr)=mdPtLst(kCtr)-sqrt((DelxLst(kCtr)/2)^2+DelTLst(kCtr)*log(2)/2);
    end
    a=min(medLst);
    fLst = zeros(1,nCtr);
    for kCtr=1:nCtr
        fLst(kCtr)=exp(-2*((a-mdPtLst(kCtr))^2-(DelxLst(kCtr)/2)^2)/DelTLst(kCtr));
    end
    pLst = (fLst.^beta.*DelTLst)/sum(fLst.^beta.*DelTLst);
    FLst = cumsum(pLst);
    Urnd = rand;
    Idx = min(find(Urnd<FLst));
    % [M,I]=max(fLst);
    Znew = randn;
    zNew = mdPtLst(Idx)+sqrt(DelTLst(Idx)/4)*Znew;
    xLst = [xLst(1:Idx),zNew,xLst((Idx+1):(nCtr+1))];
    tLst = [tLst(1:Idx),(tLst(Idx)+tLst(Idx+1))/2,tLst((Idx+1):(nCtr+1))];
end


figure
plot(tLst,xLst)
hold on
IntNum = length(tLst)-1;
MuLst = [];
for kCtr = 1:IntNum
    s = tLst(kCtr);
    t = tLst(kCtr+1);
    x = xLst(kCtr);
    y = xLst(kCtr+1);
    alpha = min(x,y);
    beta = abs(x-y);
    tau = abs(t-s);
    U = rand;
    Mu = alpha-0.5*(sqrt(beta^2+2*tau*log(1/(1-U)))-beta);
    MuLst = [MuLst;Mu];
    plot([s,t],[Mu,Mu],'r')
end
\end{verbatim}

}

\subsection{The hybrid model for the Cauchy bridge}

{
\footnotesize
\begin{verbatim}
Ntot=1000;
beta=1;
xLst = [0,0];
tLst = [0,1];
mdPtLst = (xLst(1)+xLst(2))/2;
DelxLst = xLst(2)-xLst(1);
DelTLst= tLst(2)-tLst(1);
tau = DelTLst;
delta = DelxLst; %This equals 0
x = 2*delta/tau; %This equals 0
u = x/2; %This equals 0
Unew = rand;
Gfun = @(r)condlCDF0(r)-Unew;
v = fzero(Gfun,0);
Xnew = mdPtLst+(tau/2)*v;
xLst = [xLst(1),Xnew,xLst(2)];
tLst = [tLst(1),(tLst(1)+tLst(2))/2,tLst(2)];
for nCtr=2:Ntot
    mdPtLst=(xLst(1:nCtr)+xLst(2:(nCtr+1)))/2;
    DelxLst=(xLst(2:(nCtr+1))-xLst(1:nCtr));
    DelTLst=(tLst(2:(nCtr+1))-tLst(1:nCtr));
    medLst = zeros(1,nCtr);
    for kCtr=1:nCtr
        medLst(kCtr)=mdPtLst(kCtr)-0.5*sqrt(2*DelxLst(kCtr)^2+DelTLst(kCtr)^2);
    end
    a=min(medLst);
    fLst = zeros(1,nCtr);
    for kCtr=1:nCtr
        fLst(kCtr)=(DelTLst(kCtr)^2+DelxLst(kCtr)^2)/(DelTLst(kCtr)^2+(2*(mdPtLst(kCtr)-a))^2);
    end
    pLst = (fLst.^beta.*DelTLst)/sum(fLst.^beta.*DelTLst);
    FLst = cumsum(pLst);
    Urnd = rand;
    Idx = min(find(Urnd<FLst));
    % [M,I]=max(fLst);
    tau = DelTLst(Idx);
    delta = DelxLst(Idx);
    x = 2*delta/tau;
    u = x/2;
    Unew = rand;
    Gfun = @(r)condlCDF(u,r)-Unew;
    v = fzero(Gfun,0);
    Xnew = mdPtLst(Idx)+(tau/2)*v;
    xLst = [xLst(1:Idx),Xnew,xLst((Idx+1):(nCtr+1))];
    tLst = [tLst(1:Idx),(tLst(Idx)+tLst(Idx+1))/2,tLst((Idx+1):(nCtr+1))];
end


figure
hold on
IntNum = length(tLst)-1;
MuLst = [];
for kCtr = 1:IntNum
    s = tLst(kCtr);
    t = tLst(kCtr+1);
    x = xLst(kCtr);
    y = xLst(kCtr+1);
    alpha = min(x,y);
    beta = abs(x-y);
    tau = abs(t-s);
    U = rand;
    Mu = alpha+0.5*beta-0.5*sqrt((U*tau^2+beta^2)/(1-U));
    MuLst = [MuLst;Mu];
    plot([s,t],[Mu,Mu],'r.')
end
plot(tLst,xLst,'b.')


function G=condlCDF(u,v)
    G = 0.5 + (1/(2*pi))*(atan(u+v)-atan(u-v)+(1/(2*u))*log((1+(u+v)^2)/(1+(u-v)^2)));
end

function G=condlCDF0(v)
    G = 0.5 + (1/pi)*(atan(v)+v/(1+v^2));
end
\end{verbatim}

}

\section{Algorithms in Sections \ref{sec:2} and \ref{sec:3}} 
Please reference the following link for a GitHub repository containing the Monte Carlo Bisection and Iterative Golden Section Search algorithms, with the associated helper functions.

\begin{center}
\url{https://github.com/erikwu1/OnlineSearchMethods}
\end{center}

\section*{Acknowledgments}
Shannon Starr benefitted from a Simons collaboration grant. He is also grateful for some conversations with Xinghui Xu
which helped inform parts of the Outlook section.

\baselineskip=12pt
\bibliographystyle{plain}

\end{document}